
\documentclass[final]{siamltex}

\usepackage{graphicx}
\usepackage{color}
\usepackage{bm}
\usepackage{amsfonts}

\newcommand{\tr}{^{\sf T}}
\newcommand{\m}[1]{{\bf{#1}}}
\newcommand{\g}[1]{\mbox{\boldmath $#1$}}
\newcommand{\C}[1]{{\cal {#1}}}
\input epsf


\title{
An Active Set Algorithm for Nonlinear Optimization with Polyhedral Constraints
\thanks{
February 28, 2016.
The authors gratefully acknowledge support by the National
Science Foundation under grants 1522629 and 1522654,
by the Office of Naval Research under grants
N00014-11-1-0068 and N00014-15-1-2048, by the Air Force Research
Laboratory under contract FA8651-08-D-0108/0054, and by
the National Science Foundation of China under grant 11571178.
}
}

\author{
    William W. Hager\thanks{{\tt hager@ufl.edu},
        http://people.clas.ufl.edu/hager/,
        PO Box 118105,
        Department of Mathematics,
        University of Florida, Gainesville, FL 32611-8105.
        Phone (352) 294-2308. Fax (352) 392-8357.}
\and
    Hongchao Zhang\thanks{{\tt hozhang@math.lsu.edu},
        http://www.math.lsu.edu/$\sim$hozhang,
        Department of Mathematics,
        Louisiana State University, Baton Rouge, LA 70803-4918.
        Phone (225) 578-1982. Fax (225) 578-4276.}
}

\begin{document}

\maketitle

\begin{abstract}
A polyhedral active set algorithm PASA is developed for solving a
nonlinear optimization problem whose feasible set is a polyhedron.
Phase one of the algorithm is the gradient projection method,
while phase two is any algorithm for solving a linearly constrained
optimization problem.
Rules are provided for branching between the two phases.
Global convergence to a stationary point is established,
while asymptotically PASA performs
only phase two when either a nondegeneracy assumption holds,
or the active constraints are linearly independent and a strong
second-order sufficient optimality condition holds.
\end{abstract}

\begin{keywords}
polyhedral constrained optimization,
active set algorithm, PASA, gradient projection algorithm,
local and global convergence
\end{keywords}

\begin{AMS}
90C06, 90C26, 65Y20
\end{AMS}

\pagestyle{myheadings}
\thispagestyle{plain}
\markboth{W. W. HAGER and H. ZHANG}
{POLYHEDRAL CONSTRAINED NONLINEAR OPTIMIZATION}

\section{Introduction}

%
We develop an active set algorithm for a general
nonlinear polyhedral constrained optimization problem
\begin{equation} \label{P}
\min \; \{ f (\m{x}) : \m{x} \in \Omega \}, \quad
\mbox{where }
\Omega = \{ \m{x} \in \mathbb{R}^n: \m{Ax} \le \m{b} \}.
\end{equation}
Here $f$ is a real-valued, continuously differentiable function,
$\m{A} \in \mathbb{R}^{m \times n}$, $\m{b} \in \mathbb{R}^m$, and
$\Omega$ is assumed to be nonempty.
In an earlier paper \cite{hz05a}, we developed an active set algorithm
for bound constrained optimization.
In this paper, we develop new machinery for handling the more
complex polyhedral constraints of (\ref{P}).
Our polyhedral active set algorithm (PASA) has two phases:
phase one is the gradient projection algorithm,
while phase two is any algorithm for solving a linearly constrained
optimization problem over a face of the polyhedron $\Omega$.
The gradient projection algorithm of phase one
is robust in the sense that it converges
to a stationary point under mild assumptions, but the convergence rate
is often linear at best.
When optimizing over a face of the polyhedron in phase two,
we could accelerate the convergence through the use of a superlinearly
convergent algorithm based on conjugate gradients,
a quasi-Newton update, or a Newton iteration.
In this paper, we give rules for switching between phases
which ensure that asymptotically, only phase two is performed.
Hence, the asymptotic convergence rate of PASA coincides with the
convergence rate of the scheme used to solve the linearly constrained
problem of phase two.
A separate paper will focus on a specific numerical implementation of PASA.

We briefly survey some of the rich history of active set methods.
Some of the initial work focused on
the use of the conjugate gradient method with bound constraints
as in \cite{dt83, do97, do03, dfs03, mt91, po69, yt91}.
Work on gradient projection methods include
\cite{be76, cm87, G64,lp66, mt72, sp97}.
Convergence is accelerated using Newton and trust region
methods \cite{cgt00}.
Superlinear and quadratic convergence for nondegenerate problems
can be found in \cite{be82,bmt90, cgt88, fjs98}, while
analogous convergence results are given in
\cite{flp02, fms94, le91, lm99}, even for degenerate problems.
The affine scaling interior point approach
\cite{bcl99, cl94, cl96, cl97, dhv98, HagerZhang13a, huu99, kk05, uuh99, zh04}
is related to the trust region algorithm.
Linear, superlinear, and quadratic convergence results have been established.

Recent developments on active set methods for quadratic programming problem
can be found in \cite{ForsgrenGillWong15, GillWong15}.
A treatment of active set methods in a rather general setting is
given in \cite{IzmailovSolodov14}.
We also point out the recent work \cite{GoldbergLeyffer15} on
a very efficient two-phase
active set method for conic-constrained quadratic programming, and
the earlier work \cite{FriedlanderLeyffer08} on a two-phase
active set method for quadratic programming.
As in \cite{hz05a} the first phase in both applications
is the gradient projection method.
The second phase is a Newton method in \cite{GoldbergLeyffer15},
while it is a linear solver in \cite{FriedlanderLeyffer08}.
Note that PASA applies to the general nonlinear objective in (\ref{P}).
Active set strategies were applied to
$\ell_1$ minimization in \cite{wyzg12,wygz10} and in \cite{hz15}
they were applied to the minimization of a nonsmooth dual
problem that arises when projecting a point onto a polyhedron.
In \cite{wyzg12,wygz10}, a nonmonotone line search based on
``Shrinkage" is used to estimate a support at the solution,
while a nonmonotone SpaRSA algorithm \cite{HagerPhanZhang11,SpaRSA} is used
in \cite{hz15} to approximately identify active constraints.

Unlike most active set methods in the literature,
our algorithm is not guaranteed to identify the active constraints in
a finite number of iterations due to the structure of the line
search in the gradient projection phase.
Instead, we show that only the fast phase two algorithm is performed
asymptotically, even when strict complementary slackness is violated.
Moreover, our line search only requires one projection in
each iteration, while algorithms that identify active constraints often
employ a piecewise projection scheme that may require
additional projections when the stepsize increases.

The paper is organized as follows.
Section~\ref{structure} gives a detailed statement of the polyhedral
active set algorithm, while Section~\ref{global} establishes its global
convergence.
Section~\ref{stationarity} gives some properties for the solution
and multipliers associated with a Euclidean projection onto $\Omega$.
Finally, Section~\ref{nondegenerate} shows that asymptotically
PASA performs only phase two when converging to a nondegenerate
stationary point, while Section~\ref{degenerate} establishes the
analogous result for degenerate problems when the active constraint
gradients are linearly independent and a strong second-order sufficient
optimality condition holds.

{\bf Notation.}
Throughout the paper, $c$ denotes a generic nonnegative constant which
has different values in different inequalities.
For any set $\C{S}$, $|\C{S} |$ stands for the number of elements
(cardinality) of $\C{S}$, while $\C{S}^c$ is the complement of $\C{S}$.
The set $\C{S} - \m{x}$ is defined by
\[
\C{S} - \m{x} = \{ \m{y} - \m{x} : \m{y} \in \C{S} \}.
\]
The distance between a set $\C{S} \subset \mathbb{R}^n$ and a point
$\m{x} \in \mathbb{R}^n$ is given by
\[
\mbox{dist }(\m{x}, \C{S}) = \inf \{ \|\m{x} - \m{y}\|: \m{y} \in \C{S} \},
\]
%
where $\| \cdot \|$ is the Euclidean norm.
The subscript $k$ is often used to denote the iteration number
in an algorithm, while $x_{ki}$ stands for the $i$-th component of
the iterate $\m{x}_k$.
The gradient $\nabla f(\m{x})$ is a row vector while
$\m{g}(\m{x}) = \nabla f(\m{x})\tr$ is the gradient arranged as a column vector;
here $\tr$ denotes transpose.
The gradient at the iterate $\m{x}_k$ is $\m{g}_k = \m{g}(\m{x}_k)$.
In several theorems, we assume that $f$ is Lipschitz continuously differentiable
in a neighborhood of a stationary point $\m{x}^*$.
The Lipschitz constant for $\nabla f$ is always denoted $\kappa$.
We let $\nabla^2 f(\m{x})$ denote the Hessian of $f$ at $\m{x}$.
The ball with center $\m{x}$ and radius $r$ is denoted
$\C{B}_{r}(\m{x})$.
For any matrix $\m{M}$, $\C{N}(\m{M})$ is the null space.
If $\C{S}$ is a subset of the row indices of $\m{M}$,
then $\m{M}_{\C{S}}$ denotes the submatrix of $\m{M}$ with row indices $\C{S}$.
For any vector $\m{b}$, $\m{b}_{\C{S}}$
is the subvector of $\m{b}$ with indices  $\C{S}$.
$P_\Omega (\m{x})$ denotes the Euclidean projection of $\m{x}$ onto $\Omega$:
\begin{equation} \label{proj}
\C{P}_{\Omega} (\m{x}) =
\arg  \; \min \{ \|\m{x} - \m{y}\| : \m{y} \in \Omega \} .
\end{equation}
For any $\m{x} \in \Omega$, the active and free index sets are
defined by
\[
\C{A}(\m{x}) = \{ i: (\m{Ax} - \m{b})_i = 0\} \quad \mbox{and} \quad
\C{F}(\m{x}) = \{ i: (\m{Ax} - \m{b})_i < 0\},
\]
respectively.

\section{Structure of the algorithm}
\label{structure}
As explained in the introduction,
PASA uses the gradient projection algorithm in phase one and
a linearly constrained optimization algorithm in phase two.
Algorithm~\ref{GPA} is the gradient projection algorithm (GPA) used for the
analysis in this paper.
A cartoon of the algorithm appears in Figure~\ref{gpa-fig}.
\renewcommand\figurename{Alg.}
\begin{figure}[t]
\begin{center}
{ \tt
\begin{tabular}{l}
\hline
\\
{\bf \textcolor{blue}{Parameters:}}
$\delta$ and $\eta \in (0, 1)$, $\alpha \in (0, \infty)$\\[.05in]
While stopping condition does not hold\\[.05in]
\hspace*{.3in}1.~$\m{d}_k =$
$\m{y}(\m{x}_k, \alpha) - \m{x}_k$, $\m{y}(\m{x}, \alpha) =$
$\C{P}_\Omega (\m{x} - \alpha \m{g}(\m{x}))$ \\[.05in]
\hspace*{.3in}2.~$s_k = \eta^j$ where $j \ge 0$ is smallest integer
such that \\[.05in]
\hspace*{.6in}$f(\m{x}_k + s_k \m{d}_k) \le f(\m{x}_k) + s_k \delta
\nabla f(\m{x}_k)\m{d}_k$\\[.05in]
\hspace*{.3in}3.~$\m{x}_{k+1} = \m{x}_k + s_k \m{d}_k$
and $k \leftarrow k + 1$
\\[.05in]
End while\\
\hline
\end{tabular}
}
\end{center}
\caption{Prototype gradient projection algorithm (GPA). \label{GPA}}
\end{figure}
\renewcommand\figurename{Fig.}
\begin{figure}
\begin{center}
\includegraphics[scale=0.5]{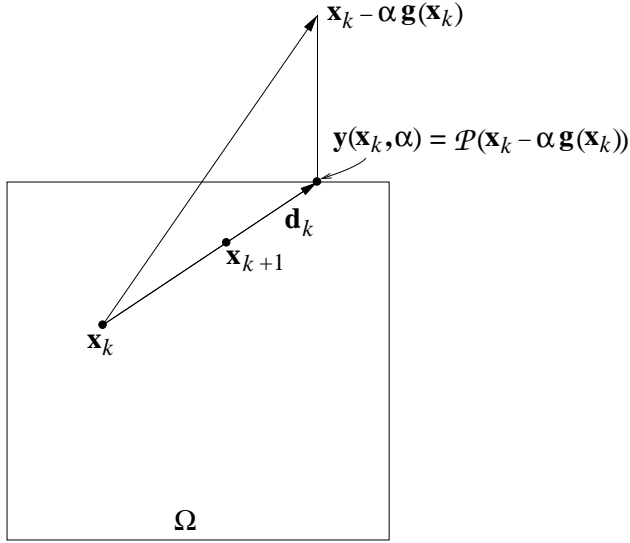}
\caption{An iteration of the gradient projection algorithm.
\label{gpa-fig}}
\end{center}
\end{figure}
This is a simple monotone algorithm based on an Armijo line search.
Better numerical performance is achieved with a more general nonmonotone
line search such as that given in \cite{hz05a},
and all the analysis directly extends to this more general framework;
however, to simplify the analysis and discussion in the paper, we utilize
Algorithm~\ref{GPA} for the GPA.

The requirements for the linearly constrained optimizer (LCO)
of phase two, which operates on the {\it faces} of $\Omega$, are now developed.
One of the requirements is that when the active sets repeat in an
infinite series of iterations, then the iterates must
approach stationary.
To formulate this requirement in a precise way,
we define
\begin{equation}\label{dL}
\m{g}^\C{I} (\m{x}) =
\C{P}_{\C{N}(\m{A}_\C{I})}(\m{g}(\m{x})) =
\arg \; \min \{ \|\m{y} - \m{g}(\m{x})\| : \m{y} \in \mathbb{R}^n \mbox{ and }
\m{A}_\C{I} \m{y} = \m{0} \}.
\end{equation}
Thus $\m{g}^\C{I} (\m{x})$ is the projection of the gradient
$\m{g}(\m{x})$ onto the null space $\C{N}(\m{A}_\C{I})$.
We also let $\m{g}^\C{A}(\m{x})$ denote
$\m{g}^\C{I} (\m{x})$ for $\C{I} = \C{A}(\m{x})$.
If $\C{A}(\m{x})$ is empty, then $\m{g}^\C{A} (\m{x}) = \m{g}(\m{x})$,
while if $\m{x}$ is a vertex of $\Omega$, then $\m{g}^\C{A} (\m{x}) = \m{0}$.
This suggests that $e(\m{x}) = \|\m{g}^\C{A} (\m{x})\|$ represents a
local measure of stationarity in the sense
that it vanishes if and only if $\m{x}$ is a stationary point on its
associated face
\[
\{ \m{y} \in \Omega : (\m{Ay} - \m{b})_i = 0
\mbox{ for all } i \in \C{A}(\m{x}) \} .
\]
The requirements for the phase two LCO are the following:
\smallskip

\begin{itemize}
\item[F1.]
$\m{x}_{k} \in \Omega$ and $f(\m{x}_{k+1}) \le f(\m{x}_k)$ for each $k$.
\item[F2.]
$\C{A}(\m{x}_{k}) \subset \C{A}(\m{x}_{k+1})$ for each $k$.
\item[F3.]
If $\C{A}(\m{x}_{j+1}) = \C{A}(\m{x}_j)$
for $j \ge k$, then $\liminf\limits_{j\to\infty} e(\m{x}_j) = 0$.
\end{itemize}
\smallskip

Condition F1 requires that the iterates in phase two are monotone,
in contrast to phase one where the iterates could be nonmonotone.
By F2 the active set only grows during phase two, while F3
implies that the local stationarity measure becomes small
when the active set does not change.
Conditions F1, F2, and F3 are easily fulfilled by algorithms based
on gradient or Newton type iterations which employ a monotone line search
and which add constraints to the active set whenever a new constraint
becomes active.

Our decision for switching between phase one (GPA)
and phase two (LCO) is based on a
comparison of two different measures of stationarity.
One measure is the local stationarity measure $e(\cdot)$, introduced already,
which measures stationarity relative to a face of $\Omega$.
The second measure of stationarity is a global metric in the sense that
it vanishes at $\m{x}$ if and only if $\m{x}$ is
a stationary point for the optimization problem (\ref{P}).
For $\alpha \ge 0$, let $\m{y}(\m{x}, \alpha)$
be the point obtained by taking a step from $\m{x}$ along the negative
gradient and projecting onto $\Omega$; that is,
\begin{equation}\label{y-def}
\m{y}(\m{x}, \alpha) = \C{P}_\Omega (\m{x} - \alpha \m{g}(\m{x})) =
\arg \;
\min \left\{ \frac{1}{2} \|\m{x} - \alpha \m{g}(\m{x}) - \m{y}\|^2:
\m{Ay} \le \m{b} \right\}.
\end{equation}
The vector
\begin{equation}\label{dalpha}
\m{d}^\alpha (\m{x}) = \m{y}(\m{x},\alpha) - \m{x}
\end{equation}
points from $\m{x}$ to the projection of
$\m{x} - \alpha \m{g}(\m{x})$ onto $\Omega$.
As seen in \cite[Prop. 2.1]{hz05a}, when $\alpha > 0$,
$\m{d}^\alpha (\m{x}) = \m{0}$
if and only if $\m{x}$ is a stationary point for (\ref{P}).
We monitor convergence to a stationary point using the function $E$
defined by
\[
E(\m{x}) = \|\m{d}^1(\m{x}) \|.
\]
$E(\m{x})$ vanishes if and only if $\m{x}$ is a stationary point of (\ref{P}).
If $\Omega = \mathbb{R}^n$, then $E(\m{x}) = \|\m{g}(\m{x})\|$, the
norm of the gradient, which is the usual way to assess convergence
to a stationary point in unconstrained optimization.

The rules for switching between phase one and phase two depend on the relative
size of the stationarity measures $E$ and $e$.
We choose a parameter $\theta \in (0, 1)$ and branch from phase one to
phase two when $e(\m{x}_k) \ge \theta E(\m{x}_k)$.
Similarly, we branch from phase two to phase one when
$e(\m{x}_k) < \theta E(\m{x}_k)$.
To ensure that only phase two is executed asymptotically at a degenerate
stationary point, we may need to decrease $\theta$ as the iterates converge.
The decision to decrease $\theta$ is based on what we called the
undecided index set $\C{U}$ which is defined as follows.
Let $\m{x}$ denote the current iterate,
let $E(\m{x})$ be the global measure of stationarity,
and let $\g{\lambda}(\m{x})$ denote any Lagrange multiplier associated with
the polyhedral constraint in (\ref{y-def}) and $\alpha = 1$.
That is, if $\m{y} = \m{y}(\m{x}, 1)$ is the solution of (\ref{y-def})
for $\alpha = 1$, then $\g{\lambda} (\m{x})$ is any vector
that satisfies the conditions
\[
\m{y} - \m{x} + \m{g}(\m{x}) + \m{A}\tr \g{\lambda} (\m{x}) = \m{0}, \quad
\g{\lambda} (\m{x}) \ge \m{0}, \quad \lambda_i (\m{x}) = 0
\mbox{ if } i \in \C{F}(\m{y}).
\]
Given parameters $\beta \in (1, 2)$ and $\gamma \in (0,1)$,
the undecided index set is defined by
\[
\C{U}(\m{x}) = \{ i : \lambda_i (\m{x}) \ge E(\m{x})^\gamma
\mbox{ and } (\m{b} - \m{Ax})_i \ge E(\m{x})^\beta \} .
\]

If $\m{x}$ is close enough to a stationary point that $E(\m{x})$ is small,
then the indices in $\C{U}(\m{x})$ correspond to those constraints
for which the associated multiplier $\lambda_i (\m{x})$ is relatively
large in the sense that $\lambda_i (\m{x}) \ge E(\m{x})^\gamma$,
and the $i$-th constraint is relatively inactive in the sense
that $(\m{b} - \m{Ay})_i \ge E(\m{x})^\beta$.
By the first-order optimality conditions at a local minimizer,
large multipliers are associated with active constraints.
Hence, when the multiplier is relatively large and the constraint is
relatively inactive, we consider the constraint undecided.
When $\C{U}(\m{x})$ is empty, then we feel that the active constraints
are nearly identified, so we decrease $\theta$ in phase one so that
phase two will compute a more accurate local stationary point before
branching back to phase one.

Algorithm~\ref{PASA} is the polyhedral active set algorithm (PASA).
The parameter $\epsilon$ is the convergence tolerance,
the parameter $\theta$ controls the branching between phase one and
phase two, while the parameter $\mu$ controls the decay of $\theta$
when the undecided index set is empty.
\renewcommand\figurename{Alg.}
\begin{figure}[t]
\begin{center}
{ \tt
\begin{tabular}{l}
\hline
\\
{\bf \textcolor{blue}{Parameters:}}
$\epsilon \in [0, \infty)$, $\theta$ and $\mu \in (0, 1)$\\[.05in]
$\m{x}_1 = \C{P}_\Omega (\m{x}_0)$, $k = 1$ \\[.05in]
{\bf \textcolor{blue}{Phase one:}}
While $E(\m{x}_k) > \epsilon$ execute GPA\\[.05in]
\hspace*{.3in}If $\C{U}(\m{x}_k) = \emptyset$ and
$e(\m{x}_k) < \theta E(\m{x}_k)$, then $\theta \leftarrow \mu \theta$.\\[.05in]
\hspace*{.3in}If $e(\m{x}_k) \ge \theta E(\m{x}_k)$, goto phase two.
\\[.05in]
End while\\[.2in]
{\bf \textcolor{blue}{Phase two:}}
While $E(\m{x}_k) > \epsilon$ execute LCO \\[.05in]
\hspace*{.3in}If $e(\m{x}_k) < \theta E(\m{x}_k)$, goto phase one.\\[.05in]
End while\\
\hline
\end{tabular}
}
\end{center}
\caption{Polyhedral active set algorithm (PASA). \label{PASA}}
\end{figure}
\renewcommand\figurename{Fig.}
\section{Global convergence}
\label{global}
Since Algorithm~\ref{GPA}, GPA, is a special case of the nonmonotone
gradient projection algorithm studied in \cite{hz05a},
our previously established global convergence result, stated below, holds.
\smallskip
\begin{theorem}\label{GPA-theorem}
Let $\C{L}$ be the level set defined by
\begin{equation}\label{level}
\C{L} = \{ \m{x} \in \Omega : f (\m{x}) \le f(\m{x}_0) \} .
\end{equation}
Assume the following conditions hold:
\begin{itemize}
\item[{\rm A1.}]
$f$ is bounded from below on $\C{L}$ and
$d_{\max} = {\sup}_k \|\m{d}_k\| < \infty$.
\item[{\rm A2.}]
If $\bar{\C{L}}$ is the collection of $\m{x}\in \Omega$ whose
distance to $\C{L}$ is at most $d_{\max}$,
then $\nabla f$ is Lipschitz continuous on $\bar{\C{L}}$.
\end{itemize}
Then GPA with $\epsilon = 0$ either terminates in a finite number of
iterations at a stationary point, or we have
\[
\liminf\limits_{k \to\infty} E(\m{x}_k) =0.
\]
\end{theorem}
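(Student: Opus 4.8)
The plan is to prove the stronger statement that $\lim_{k\to\infty} E(\m{x}_k) = 0$ whenever GPA does not terminate finitely; this immediately yields the asserted $\liminf$. The starting point is the variational characterization of the projection in (\ref{y-def}): taking the feasible competitor $\m{z} = \m{x}_k$ in the inequality defining $\m{y}(\m{x}_k,\alpha) = \C{P}_\Omega(\m{x}_k - \alpha\m{g}_k)$ and rearranging with $\m{d}_k = \m{y}(\m{x}_k,\alpha)-\m{x}_k$ gives the descent estimate $\nabla f(\m{x}_k)\m{d}_k \le -\|\m{d}_k\|^2/\alpha$. Thus $\m{d}_k$ is a strict descent direction whenever $\m{d}_k \neq \m{0}$, and $\m{d}_k = \m{0}$ signals a stationary point by \cite[Prop.~2.1]{hz05a}. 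Since $\epsilon = 0$, the stopping test $E(\m{x}_k) > 0$ fails exactly at a stationary point, so finite termination is precisely the stationary case; otherwise $\m{d}_k \neq \m{0}$ for all $k$ and I proceed with the infinite argument.

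Next I would show the Armijo line search of Step~2 is well defined and produces stepsizes bounded away from zero. Because $\Omega$ is convex and both $\m{x}_k$ and $\m{y}(\m{x}_k,\alpha)$ lie in $\Omega$, every trial point $\m{x}_k + s\m{d}_k$ with $s \in [0,1]$ lies in $\Omega$ and within distance $\|\m{d}_k\| \le d_{\max}$ of $\m{x}_k \in \C{L}$, hence in $\bar{\C{L}}$, where A2 supplies a Lipschitz constant $\kappa$ for $\nabla f$ (this is exactly why $\bar{\C{L}}$ is enlarged by $d_{\max}$, using both parts of A1). The descent lemma then gives $f(\m{x}_k + s\m{d}_k) \le f(\m{x}_k) + s\nabla f(\m{x}_k)\m{d}_k + (\kappa/2)s^2\|\m{d}_k\|^2$, which combined with the descent estimate forces the Armijo condition to hold for all sufficiently small $s$, so the search terminates. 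Inspecting the largest rejected stepsize $s_k/\eta$ in the case $s_k < 1$ and again inserting the descent estimate yields the uniform lower bound $s_k \ge s_{\min} := \min\{1,\, 2\eta(1-\delta)/(\alpha\kappa)\} > 0$.

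Summing the Armijo inequality and applying the descent estimate gives $f(\m{x}_k) - f(\m{x}_{k+1}) \ge -s_k\delta\,\nabla f(\m{x}_k)\m{d}_k \ge (s_{\min}\delta/\alpha)\|\m{d}_k\|^2$. Since A1 guarantees $f$ is bounded below on $\C{L}$ and the iterates remain in $\C{L}$ by monotonicity, the telescoping sum of the left-hand sides is finite, whence $\sum_k \|\m{d}_k\|^2 < \infty$ and therefore $\|\m{d}_k\| \to 0$.

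The remaining and most delicate step is to pass from $\|\m{d}_k\| = \|\m{d}^\alpha(\m{x}_k)\| \to 0$, which is phrased in terms of the line-search parameter $\alpha$, to $E(\m{x}_k) = \|\m{d}^1(\m{x}_k)\| \to 0$, which uses the unit stepsize. For this I would invoke the monotonicity properties of the projected-gradient map, namely that $\|\m{d}^\alpha(\m{x})\|$ is nondecreasing in $\alpha$ while $\|\m{d}^\alpha(\m{x})\|/\alpha$ is nonincreasing in $\alpha$; both follow by combining the variational inequalities for $\C{P}_\Omega$ at the two stepsizes $1$ and $\alpha$. Together they give $\|\m{d}^1(\m{x})\| \le \max\{1,\,1/\alpha\}\,\|\m{d}^\alpha(\m{x})\|$, so $E(\m{x}_k) \le \max\{1,\,1/\alpha\}\,\|\m{d}_k\| \to 0$, proving $\liminf_{k\to\infty} E(\m{x}_k) = 0$. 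I expect this comparison to be the crux of the argument, since it is the one place where the mismatch between the fixed stepsize $\alpha$ defining the search direction $\m{d}^\alpha$ in (\ref{dalpha}) and the unit stepsize defining the global stationarity measure $E$ must be reconciled; the rest is the standard gradient-projection/Armijo analysis. An equivalent and shorter route, matching the remark preceding the theorem, is to verify directly that GPA is the monotone special case of the nonmonotone gradient projection scheme of \cite{hz05a} and then quote its established convergence theorem.
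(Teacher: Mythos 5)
Your proposal is correct, but it takes a genuinely different route from the paper: the paper's entire proof of this theorem is the one-line observation, made just before the statement, that GPA is the monotone special case of the nonmonotone gradient projection algorithm of \cite{hz05a}, whose global convergence theorem is then quoted verbatim. You instead give a self-contained argument, and every ingredient you use is sound: the descent estimate $\nabla f(\m{x}_k)\m{d}_k \le -\|\m{d}_k\|^2/\alpha$ is property P6 of \cite{hz05a} (used by the paper itself as (\ref{g1}) in the proof of Theorem~\ref{global_theorem}); the stepsize lower bound $s_k \ge \min\{1,\,2\eta(1-\delta)/(\alpha\kappa)\}$ is exactly (\ref{lower}), which the paper imports from \cite[Lem.~2.1]{zhh04}; and the comparison $\|\m{d}^1(\m{x})\| \le \max\{1,1/\alpha\}\,\|\m{d}^\alpha(\m{x})\|$ is the P4/P5 monotonicity used in (\ref{g2}). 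Your identification of the $\alpha$-versus-unit-stepsize reconciliation as the crux is apt, and your argument for the Armijo trial points lying in $\bar{\C{L}}$ correctly explains why A1 and A2 are phrased as they are. What your route buys is a stronger conclusion: because the monotone Armijo rule gives summability of $\|\m{d}_k\|^2$, you obtain $\lim_{k\to\infty}E(\m{x}_k)=0$, whereas the cited nonmonotone result only delivers the $\liminf$; what it costs is length and some duplication of machinery the paper deliberately outsources. Your closing remark already recognizes the citation shortcut, so you have effectively covered both routes.
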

\smallskip

The global convergence of PASA essentially follows from the global convergence
of GPA and the requirement F3 for the linearly constrained optimizer.
\begin{theorem}\label{global_theorem}
If the assumptions of Theorem~$\ref{GPA-theorem}$ hold and the linearly
constrained optimizer satisfies {\rm F1--F3}, then
PASA with $\epsilon = 0$ either terminates in a finite number of
iterations at a stationary point, or we have
\begin{equation}\label{lim_to_zero}
\liminf\limits_{k \to\infty} E(\m{x}_k)=0.
\end{equation}
\end{theorem}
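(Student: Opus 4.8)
The plan is to prove (\ref{lim_to_zero}) by a case analysis on how many iterations PASA spends in each phase, and to reduce each case to a setting where a result already in hand applies. Assume throughout that PASA does not terminate finitely at a stationary point (otherwise there is nothing to prove). The key dichotomy is whether phase two is entered infinitely often or only finitely often. First I would dispose of the case in which phase two is entered only finitely many times: then there is a last iteration after which PASA remains in phase one forever, so from that point on PASA coincides with GPA. Theorem~\ref{GPA-theorem} then gives $\liminf_{k\to\infty} E(\m{x}_k) = 0$ directly, since the tail of the PASA iteration is exactly a GPA run (the switching logic never fires again).

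The substantive case is when phase two is executed infinitely often. Here I would argue by contradiction: suppose $\liminf_{k\to\infty} E(\m{x}_k) = \rho > 0$, so that $E(\m{x}_k) \ge \rho$ for all large $k$. The engine of the argument is the monotonicity of $f$: by F1 the objective is nonincreasing during phase two, and GPA in phase one is likewise monotone by its Armijo line search, so $f(\m{x}_k)$ is nonincreasing along the entire PASA sequence and, being bounded below by A1, converges. Each phase-two visit begins when the branching test $e(\m{x}_k) \ge \theta E(\m{x}_k)$ triggers and ends when $e(\m{x}_k) < \theta E(\m{x}_k)$. The crucial observation is that during any single phase-two sojourn the active set can only grow (by F2) and is bounded by the finite index set $\{1,\dots,m\}$, so each sojourn either terminates by the branching test or else the active set eventually stabilizes; once it stabilizes, F3 forces $\liminf e(\m{x}_j) = 0$, which drives $e$ below $\theta E(\m{x}_j) \le \theta$ (using $E \le$ a bound) and triggers a branch back to phase one. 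Hence each phase-two sojourn is finite and, since phase two is entered infinitely often, phase one is also entered infinitely often, contributing infinitely many GPA steps.

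The main obstacle is extracting a contradiction from the infinitely many GPA steps that are interleaved with phase-two sojourns rather than run consecutively. The point I would press on is that Theorem~\ref{GPA-theorem} as stated analyzes a pure GPA run, whereas here the phase-one iterations are scattered. The resolution is that the global convergence proof for GPA in \cite{hz05a} quantifies the decrease $f(\m{x}_k) - f(\m{x}_{k+1})$ produced by a single GPA step whenever $E(\m{x}_k)$ is bounded away from zero: if $E(\m{x}_k) \ge \rho$, the Armijo step yields a decrease bounded below by a fixed positive quantity $c(\rho) > 0$ depending only on $\rho$, $d_{\max}$, and the Lipschitz constant of A2. Since phase-one GPA steps occur infinitely often and each such step decreases $f$ by at least $c(\rho)$, while the phase-two steps never increase $f$, the total decrease in $f$ is infinite, contradicting the convergence of $f(\m{x}_k)$ established above. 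This contradiction forces $\rho = 0$, giving (\ref{lim_to_zero}). The one technical point to verify carefully is that the branching threshold together with $e(\m{x}_k) \ge \theta E(\m{x}_k)$ at entry does not prevent infinitely many genuine GPA steps from being taken when $E$ stays bounded below; this follows because each phase-two sojourn is finite, so control returns to phase one infinitely often and at least one GPA step is taken on each return.
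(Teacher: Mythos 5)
Your proposal is correct in substance and follows essentially the same route as the paper: a case split on the long-run phase behavior, with the ``eventually only phase one'' case delegated to Theorem~\ref{GPA-theorem}, the stabilization of the active set via F2 plus F3 handling an unending phase-two run, and a per-iteration Armijo decrease bounded away from zero (exactly what the paper extracts from P4--P6 of \cite{hz05a} and the stepsize bound of \cite{zhh04}) yielding the contradiction with A1 when phase one recurs infinitely often. The one point to tighten is your opening dichotomy: from ``phase two is \emph{entered} only finitely many times'' it does not follow that ``PASA remains in phase one forever,'' since the last phase-two sojourn could be infinite; the paper treats this as a separate case, concluding directly from F2, F3, and the standing inequality $e(\m{x}_k) \ge \theta E(\m{x}_k)$ (with $\theta$ frozen during phase two) that $\liminf_{k\to\infty} E(\m{x}_k) = 0$. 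Your own machinery covers this scenario if the split is restated as ``finitely many versus infinitely many phase-two \emph{iterations},'' since under the contradiction hypothesis $E(\m{x}_k)\ge\rho$ your F3 argument forces every sojourn to terminate; as written, though, the case boundary leaves that scenario unaddressed. Also, the parenthetical ``using $E\le$ a bound'' should be the lower bound $E(\m{x}_k)\ge\rho$, which is what makes $\liminf e = 0$ trip the test $e(\m{x}_k) < \theta E(\m{x}_k)$.
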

\smallskip
\begin{proof}
If only phase one is performed for $k$ sufficiently large, then
(\ref{lim_to_zero}) follows from Theorem~\ref{GPA-theorem}.
If only phase two is performed for $k$ sufficiently large, then
$e(\m{x}_k) \ge \theta E(\m{x}_k)$ for $k$ sufficiently large.
Since $\theta$ is only changed in phase one, we can treat
$\theta$ as a fixed positive scalar for $k$ sufficiently large.
By F2, the active sets approach a fixed limit for $k$ sufficiently large.
By F3 and the inequality $e(\m{x}_k) \ge \theta E(\m{x}_k)$,
(\ref{lim_to_zero}) holds.
Finally, suppose that there are an infinite number of branches from
phase two to phase one.
If (\ref{lim_to_zero}) does not hold, then there exists $\tau > 0$
such that $E(\m{x}_k) = \|\m{d}^1(\m{x}_k)\| \ge \tau$ for all $k$.
By property P6 of \cite{hz05a} and the definition
$\m{d}_k = \m{d}^\alpha(\m{x}_k)$, we have
\begin{equation}\label{g1}
\nabla f (\m{x}_k) \m{d}_k = \m{g}_k\tr \m{d}^\alpha (\m{x}_k) \le
-\|\m{d}^\alpha(\m{x}_k)\|^2/\alpha = -\| \m{d}_k\|^2/\alpha ,
\end{equation}
which implies that
\begin{equation}\label{g4}
\frac{|\m{g}_k\tr \m{d}_k|}{\|\m{d}_k\|^2} \ge \frac{1}{\alpha}.
\end{equation}
By P4 and P5 of \cite{hz05a} and the
lower bound $\|\m{d}^1(\m{x}_k)\| \ge \tau$, we have
\begin{equation}\label{g2}
\|\m{d}_k\| =
\|\m{d}^\alpha(\m{x}_k)\| \ge \min \{\alpha , 1\} \|\m{d}^1(\m{x}_k)\| \ge
\min \{\alpha , 1\} \tau.
\end{equation}
For the Armijo line search in GPA, it follows from \cite[Lem.~2.1]{zhh04} that
\[
s_k \ge \min \left\{ 1, \;
\left( \frac{2\eta(1-\delta)}{\kappa} \right)
\frac{|\m{g}_k \tr \m{d}_k|}{\|\m{d}_k\|^2}
\right\}
\]
for all iterations in GPA,
where $\kappa$ is the Lipschitz constant for $\nabla f$.
Combine this with (\ref{g4}) to obtain
\begin{equation}\label{lower}
s_k \ge \min \left\{ 1, \;
\left( \frac{2\eta(1-\delta)}{\kappa \alpha} \right) \right\} .
\end{equation}
By the line search condition in step~2 of GPA,
it follows from (\ref{g1}), (\ref{g2}), and (\ref{lower})
that there exists $c > 0$ such that
\begin{equation}\label{g3}
f(\m{x}_{k+1}) \le f(\m{x}_k) - c
\end{equation}
for all iterations in GPA with $k$ sufficiently large.
Since the objective function decreases monotonically in phase two,
and since there are an infinite number of iterations in GPA,
(\ref{g3}) contradicts the assumption A1 that $f$ is bounded from below.
\end{proof}
\section{Properties of projections}
\label{stationarity}
The proof that only phase two of PASA is executed asymptotically relies
on some properties for the solution of the projection problem (\ref{y-def})
that are established in this section.
Since the projection onto a convex set is a nonexpansive operator, we have
\begin{eqnarray}
\|\m{y}(\m{x}_1, \alpha) - \m{y}(\m{x}_2, \alpha)\| &=&
\|\C{P}_\Omega (\m{x}_1 - \alpha \m{g}(\m{x}_1)) -
\C{P}_\Omega (\m{x}_2 - \alpha \m{g}(\m{x}_2)) \| \nonumber \\
&\le& \|(\m{x}_1 - \alpha \m{g}(\m{x}_1)) -
(\m{x}_2 - \alpha \m{g}(\m{x}_2)) \| \nonumber \\
&\le& (1 + \alpha \kappa) \|\m{x}_1 - \m{x}_2\|, \label{lip}
\end{eqnarray}
where $\kappa$ is a Lipschitz constant for $\m{g}$.
Since $\m{d}^\alpha (\m{x}^*) = \m{0}$ for all $\alpha > 0$ when
$\m{x}^*$ is a stationary point, it follows that
$\m{y}(\m{x}^*, \alpha) = \m{x}^*$ for all $\alpha > 0$.
In the special case where $\m{x}_2 = \m{x}^*$, (\ref{lip}) yields
\begin{equation}\label{lip*}
\|\m{y}(\m{x}, \alpha) - \m{x}^*\| =
\|\m{y}(\m{x}, \alpha) - \m{y}(\m{x}^*, \alpha)\| \le
(1 + \alpha \kappa) \|\m{x} - \m{x}^*\| .
\end{equation}
Similar to (\ref{lip}), but with $\m{y}$ replaced by $\m{d}^\alpha$, we have
\begin{equation}\label{lipd}
\|\m{d}^\alpha (\m{x}_1) - \m{d}^\alpha(\m{x}_2)\| \le
(2 + \alpha \kappa) \|\m{x}_1 - \m{x}_2\|.
\end{equation}

Next, let us develop some properties for the
multipliers associated with the constraint in the (\ref{y-def}).
The first-order optimality conditions associated with (\ref{y-def}) can be
expressed as follows:
At any solution $\m{y} = \m{y}(\m{x}, \alpha)$ of (\ref{y-def}), there exists
a multiplier $\g{\lambda} \in \mathbb{R}^m$ such that
\begin{equation}\label{first-order}
\m{y} - \m{x} + \alpha \m{g}(\m{x}) + \m{A}\tr\g{\lambda}
= \m{0}, \quad \g{\lambda} \ge \m{0}, \quad
\lambda_i = 0 \mbox{ if } i \in \C{F}(\m{y}).
\end{equation}
Let $\Lambda (\m{x}, \alpha)$ denote the set of
multipliers $\g{\lambda}$ satisfying (\ref{first-order}) at the solution
$\m{y} = \m{y}(\m{x}, \alpha)$ of (\ref{y-def}).
If $\m{x}^*$ is a stationary point for (\ref{P}) and $\alpha > 0$,
then $\m{y}(\m{x}^*, \alpha) = \m{x}^*$, and the first equation in
(\ref{first-order}) reduces to
\[
\m{g}(\m{x}^*) + \m{A}\tr(\g{\lambda}/\alpha) = \m{0},
\]
which is the gradient of the Lagrangian for (\ref{P}), but with the
multiplier scaled by $\alpha$.
Since $\C{F}(\m{y}(\m{x}^*,\alpha)) = \C{F}(\m{x}^*)$,
$\g{\lambda}/\alpha$ is a multiplier for the constraint in (\ref{P}).
Thus if $\m{x}^*$ is a stationary point for (\ref{P}) and
$\Lambda(\m{x}^*)$ is the set of Lagrange multipliers associated with
the constraint, we have
\begin{equation}\label{scaling}
\Lambda(\m{x}^*, \alpha) = \alpha \Lambda (\m{x}^*).
\end{equation}
By (\ref{lip*}) $\m{y}(\m{x}, \alpha)$ approaches $\m{x}^*$ as $\m{x}$
approaches $\m{x}^*$.
Consequently, the indices $\C{F}(\m{x}^*)$ free at $\m{x}^*$ are
free at $\m{y}(\m{x}, \alpha)$ when $\m{x}$ is sufficiently close to $\m{x}^*$.
The multipliers associated with (\ref{y-def}) have the following
stability property.
\begin{proposition}\label{SetClose}
Suppose $\m{x}^*$ is a stationary point for $(\ref{P})$ and for
some $r > 0$, $\m{g}$ is Lipschitz continuous in $\C{B}_r (\m{x}^*)$
with Lipschitz constant $\kappa$.
If $\alpha \ge 0$ and
$\m{x} \in \C{B}_r (\m{x}^*)$ is close enough to $\m{x}^*$ that
$\C{F}(\m{x}^*) \subset \C{F}(\m{y}(\m{x}, \alpha))$, then
\[
\mbox{\rm dist}\{ \g{\lambda}, \Lambda (\m{x}^*, \alpha)\} \le
2c (1 + \kappa \alpha) \|\m{x} - \m{x}^*\|
\]
for all $\g{\lambda} \in \Lambda(\m{x}, \alpha)$,
where $c$ is independent of $\m{x}$ and depends only on $\m{A}$.
\end{proposition}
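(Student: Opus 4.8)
The plan is to derive the bound from the Hoffman error bound for systems of linear inequalities, applied to the polyhedron $\Lambda(\m{x}^*,\alpha)$. As a preliminary, note that $\Lambda(\m{x}^*,\alpha)$ is nonempty: because $\m{x}^*$ is stationary we have $\m{y}(\m{x}^*,\alpha)=\m{x}^*$, so the first equation of (\ref{first-order}) reduces to $\m{A}\tr\g{\nu}=-\alpha\m{g}(\m{x}^*)$ and a multiplier exists. Writing the set explicitly,
\[
\Lambda(\m{x}^*,\alpha)=\{\g{\nu}:\ \m{A}\tr\g{\nu}=-\alpha\m{g}(\m{x}^*),\ \g{\nu}\ge\m{0},\ \nu_i=0\ \mbox{for}\ i\in\C{F}(\m{x}^*)\},
\]
I would observe that the coefficient matrix of this system is assembled from $\m{A}\tr$, $-\m{I}$, and the rows of $\m{I}$ indexed by $\C{F}(\m{x}^*)$. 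Crucially this matrix contains neither $\m{x}$ nor $\alpha$; only the right-hand side $-\alpha\m{g}(\m{x}^*)$ carries that dependence. Hence the Hoffman constant attached to this polyhedron is a fixed number $c$ determined by $\m{A}$ alone.

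Next I would verify that the given $\g{\lambda}\in\Lambda(\m{x},\alpha)$ satisfies every constraint defining $\Lambda(\m{x}^*,\alpha)$ except possibly the linear equality. The sign condition $\g{\lambda}\ge\m{0}$ holds directly by (\ref{first-order}). For the free indices, the hypothesis $\C{F}(\m{x}^*)\subset\C{F}(\m{y}(\m{x},\alpha))$ combined with the complementarity condition in (\ref{first-order}), namely $\lambda_i=0$ for $i\in\C{F}(\m{y}(\m{x},\alpha))$, forces $\lambda_i=0$ for every $i\in\C{F}(\m{x}^*)$. Therefore the only constraint that $\g{\lambda}$ may violate is $\m{A}\tr\g{\nu}=-\alpha\m{g}(\m{x}^*)$, and the Hoffman bound collapses to an estimate of that single residual.

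To bound the residual, I would use the first equation of (\ref{first-order}), $\m{A}\tr\g{\lambda}=\m{x}-\m{y}(\m{x},\alpha)-\alpha\m{g}(\m{x})$, to write
\[
\m{A}\tr\g{\lambda}+\alpha\m{g}(\m{x}^*)=-\m{d}^\alpha(\m{x})-\alpha\bigl(\m{g}(\m{x})-\m{g}(\m{x}^*)\bigr).
\]
Since $\m{d}^\alpha(\m{x}^*)=\m{0}$, inequality (\ref{lipd}) gives $\|\m{d}^\alpha(\m{x})\|\le(2+\kappa\alpha)\|\m{x}-\m{x}^*\|$, and Lipschitz continuity of $\m{g}$ gives $\alpha\|\m{g}(\m{x})-\m{g}(\m{x}^*)\|\le\kappa\alpha\|\m{x}-\m{x}^*\|$. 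Adding these two estimates bounds the residual by $2(1+\kappa\alpha)\|\m{x}-\m{x}^*\|$. The Hoffman error bound then yields $\mbox{dist}\{\g{\lambda},\Lambda(\m{x}^*,\alpha)\}\le 2c(1+\kappa\alpha)\|\m{x}-\m{x}^*\|$, which is the assertion, with the constants $2$ and $(1+\kappa\alpha)$ emerging exactly as in the statement.

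The step requiring the most care is the uniformity of $c$ in both $\m{x}$ and $\alpha$, and this is precisely why I separate the data of the linear system from its coefficient matrix: the right-hand side $-\alpha\m{g}(\m{x}^*)$ absorbs all dependence on $\m{x}$ and $\alpha$, while the matrix that governs the Hoffman constant does not. To present $c$ as depending on $\m{A}$ only, I would note that $\C{F}(\m{x}^*)$ ranges over the finitely many subsets of $\{1,\dots,m\}$, so one may take $c$ to be the largest of the correspondingly finitely many Hoffman constants, each of which is determined by $\m{A}$.
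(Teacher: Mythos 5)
Your proposal is correct and follows essentially the same route as the paper: both apply Hoffman's error bound to the linear system defining $\Lambda(\m{x}^*,\alpha)$ (with $\lambda_i=0$ enforced on $\C{F}(\m{x}^*)$, which is where the hypothesis $\C{F}(\m{x}^*)\subset\C{F}(\m{y}(\m{x},\alpha))$ enters), and both reduce the estimate to the residual $\m{A}\tr\g{\lambda}+\alpha\m{g}(\m{x}^*)$, bounded by $2(1+\kappa\alpha)\|\m{x}-\m{x}^*\|$ via the nonexpansiveness of the projection and the Lipschitz continuity of $\m{g}$. Your explicit remark that $c$ can be taken uniform over the finitely many possible index sets is a sound touch the paper leaves implicit.
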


\begin{proof}
This is essentially a consequence of the upper Lipschitzian properties of
polyhedral multifunctions as established in \cite[Prop.~1]{Robinson81} or
\cite[Cor.~4.2]{Robinson82}.
Here is a short proof based on Hoffman's stability result \cite{Hoffman52} 
for a perturbed linear system of inequalities.
Since $\C{F}(\m{x}^*) \subset \C{F}(\m{y}(\m{x}, \alpha))$,
it follows that any $\g{\lambda} \in \Lambda (\m{x}, \alpha)$ is feasible
in the system
\[
\m{p} + \m{A}\tr \g{\lambda} = \m{0}, \quad
\g{\lambda} \ge \m{0}, \quad \lambda_i = 0 \mbox{ if } i \in \C{F}(\m{x}^*),
\]
with $\m{p} = \m{p}_1 := \m{y}(\m{x}, \alpha) - \m{x} + \alpha \m{g}(\m{x})$.
Since $\m{x}^*$ is a stationary point for (\ref{P}),
the elements of $\Lambda (\m{x}^*, \alpha)$ are feasible in the same
system but with
\[
\m{p} = \m{p}_2 := \m{x}^* - \m{x}^* + \alpha \m{g}(\m{x}^*).
\]
Hence, by Hoffman's result \cite{Hoffman52}, there exists a constant $c$,
independent of $\m{p}_1$ and $\m{p}_2$ and depending only on $\m{A}$,
such that
\[
\mbox{\rm dist}\{ \g{\lambda}, \Lambda (\m{x}^*, \alpha)\} \le
c \|\m{p}_1 - \m{p}_2\|.
\]
We use (\ref{lip*}) to obtain
\[
\|\m{p}_1 - \m{p}_2\| =
\|(\m{y}(\m{x}, \alpha) - \m{x}^*) + (\m{x}^* - \m{x}) + \alpha
(\m{g}(\m{x}) - \m{g}(\m{x}^*))\| \le
2 (1 + \alpha \kappa)\|\m{x} - \m{x}^*\|,
\]
which completes the proof.
\end{proof}
\smallskip

In the following proposition, we study the projection of a step
$\m{x}_k - \alpha \m{g}(\m{x}_k)$ onto the subset $\Omega_k$ of $\Omega$
that also satisfies the active constraints at $\m{x}_k$.
We show that the step can be replaced by
$\m{x}_k - \alpha \m{g}^\C{A}(\m{x}_k)$ without effecting the projection.
\smallskip
\begin{proposition}
\label{P=prop}
For all $\alpha \ge 0$, we have
\begin{equation}\label{P=}
\C{P}_{\Omega_k} (\m{x}_k - \alpha \m{g}(\m{x}_k)) =
\C{P}_{\Omega_k} (\m{x}_k - \alpha \m{g}^\C{A}(\m{x}_k)),
\end{equation}
where
\begin{equation}\label{omegak}
\Omega_k = \{ \m{x} \in \Omega : (\m{Ax} - \m{b})_i = 0 \mbox{ for all }
i \in \C{A}(\m{x}_k) \}.
\end{equation}
\end{proposition}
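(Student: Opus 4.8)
The plan is to reduce both sides of (\ref{P=}) to the minimization of the \emph{same} quadratic over the \emph{same} feasible set, exploiting the fact that $\m{g}(\m{x}_k)$ and $\m{g}^\C{A}(\m{x}_k)$ differ only by a vector orthogonal to the subspace in which $\Omega_k$ lives. First I would note that $\m{x}_k \in \Omega_k$: since $\m{x}_k$ is feasible and $(\m{Ax}_k - \m{b})_i = 0$ exactly for $i \in \C{A}(\m{x}_k)$, it satisfies the defining conditions in (\ref{omegak}). Writing $\C{I} = \C{A}(\m{x}_k)$, every point of $\Omega_k$ satisfies $\m{A}_\C{I}\m{y} = \m{b}_\C{I}$, so $\Omega_k \subset \m{x}_k + \C{N}(\m{A}_\C{I})$; that is, each $\m{y} \in \Omega_k$ can be written $\m{y} = \m{x}_k + \m{u}$ with $\m{u} \in \C{N}(\m{A}_\C{I})$ and $\m{A}(\m{x}_k + \m{u}) \le \m{b}$, and conversely.

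Next I would use the definition (\ref{dL}) of $\m{g}^\C{A}(\m{x}_k)$ as the projection of $\m{g}(\m{x}_k)$ onto $\C{N}(\m{A}_\C{I})$, which forces the residual $\m{g}(\m{x}_k) - \m{g}^\C{A}(\m{x}_k)$ to lie in the orthogonal complement $\C{N}(\m{A}_\C{I})^\perp = \mbox{range}(\m{A}_\C{I}\tr)$. Substituting $\m{y} = \m{x}_k + \m{u}$ into the projection objective on the left of (\ref{P=}) cancels $\m{x}_k$ and gives $\|(\m{x}_k - \alpha\m{g}(\m{x}_k)) - \m{y}\|^2 = \|\alpha \m{g}(\m{x}_k) + \m{u}\|^2$. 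Splitting $\m{g}(\m{x}_k) = \m{g}^\C{A}(\m{x}_k) + (\m{g}(\m{x}_k) - \m{g}^\C{A}(\m{x}_k))$, and using that $\m{u}$ and $\m{g}^\C{A}(\m{x}_k)$ lie in $\C{N}(\m{A}_\C{I})$ while $\m{g}(\m{x}_k) - \m{g}^\C{A}(\m{x}_k)$ lies in its orthogonal complement, the Pythagorean theorem yields
\[
\|\alpha\m{g}(\m{x}_k) + \m{u}\|^2 = \|\alpha\m{g}^\C{A}(\m{x}_k) + \m{u}\|^2 + \alpha^2\|\m{g}(\m{x}_k) - \m{g}^\C{A}(\m{x}_k)\|^2 .
\]
The final term is independent of $\m{u}$, and $\|\alpha\m{g}^\C{A}(\m{x}_k) + \m{u}\|^2$ is precisely the projection objective on the right of (\ref{P=}) expressed in the coordinate $\m{u}$. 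Hence, over $\m{y} \in \Omega_k$, the two objectives differ only by an additive constant and are minimized at the same point; since $\Omega_k$ is a nonempty closed convex set, the minimizer is unique, and (\ref{P=}) follows.

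The argument is essentially a one-line orthogonality observation, so I do not anticipate a serious obstacle. The only point demanding care is verifying that $\Omega_k$ is confined to the affine set $\m{x}_k + \C{N}(\m{A}_\C{I})$, so that the admissible displacements $\m{u}$ are exactly the vectors of $\C{N}(\m{A}_\C{I})$ subject to the remaining inequalities. This is what makes $\m{g}(\m{x}_k) - \m{g}^\C{A}(\m{x}_k)$ orthogonal to \emph{every} feasible displacement and therefore irrelevant to the minimization; without this confinement the decomposition would leave a cross term depending on $\m{u}$ and the identity could fail.
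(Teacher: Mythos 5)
Your argument is correct and is essentially the paper's own proof: the paper likewise translates by $\m{x}_k$ so that $\Omega_k - \m{x}_k \subset \C{N}(\m{A}_{\C{I}})$, observes that $\m{g}(\m{x}_k) - \m{g}^\C{A}(\m{x}_k)$ is orthogonal to that null space, and applies the Pythagorean identity to conclude the two objectives differ by a constant independent of the variable. No substantive difference.
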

\smallskip

\begin{proof}
Let $\m{p}$ be defined by
\begin{eqnarray}
\m{p} &=& \C{P}_{\Omega_k}
(\m{x}_k - \alpha \m{g}(\m{x}_k)) - \m{x}_k
\label{1}\\
&=& \arg \; \min \{
\| \m{x}_k - \alpha \m{g}(\m{x}_k) - \m{y}\| : \m{y} \in \Omega_k \} - \m{x}_k .
\nonumber
\end{eqnarray}
With the change of variables $\m{z} = \m{y} - \m{x}_k$, we can write
\begin{equation} \label{nd1}
\m{p} = \arg \; \min \{
\| \m{z} + \alpha \m{g}(\m{x}_k)\| : \m{z} \in \Omega_k - \m{x}_k \}.
\end{equation}
%
Since $ \m{g}^\C{A} (\m{x}_k)$ is the orthogonal projection of
$\m{g}(\m{x}_k)$ onto the null space $\C{N}(\m{A}_{\C{I}})$,
where $\C{I} = \C{A}(\m{x}_k)$,
the difference $\m{g}^\C{A} (\m{x}_k) - \m{g}(\m{x}_k)$ is orthogonal to
$\C{N}(\m{A}_{\C{I}})$.
Since $\Omega_k - \m{x}_k \subset \C{N}(\m{A}_{\C{I}})$, it follows from
Pythagoras that for any $\m{z} \in \Omega_k - \m{x}_k$, we have
\[
\| \m{z} + \alpha \m{g}(\m{x}_k) \|^2  =
\| \m{z} + \alpha \m{g}^\C{A}(\m{x}_k)  \|^2 +
\alpha^2 \|
\m{g}(\m{x}_k) - \m{g}^\C{A}(\m{x}_k) \|^2.
\]
Since $\m{z}$ does not appear in the last term, minimizing
$\| \m{z} + \alpha \m{g}(\m{x}_k) \|^2$ over $\m{z} \in \Omega_k - \m{x}_k$
is equivalent to minimizing $\| \m{z} + \m{g}^\C{A}(\m{x}_k)\|^2$ over
$\m{z} \in \Omega_k - \m{x}_k$.
By (\ref{nd1}), we obtain
\begin{equation}\label{nd2}
\m{p} = \arg \; \min \{
\| \m{z} + \alpha \m{g}^\C{A}(\m{x}_k)\| : \m{z} \in \Omega_k - \m{x}_k \}.
\end{equation}
Changing variables from $\m{z}$ back to $\m{y}$ gives
\begin{eqnarray*}
\m{p} &=& \arg \; \min \{
\| \m{x}_k - \alpha \m{g}^\C{A}(\m{x}_k) - \m{y}\| :
\m{y} \in \Omega_k \} - \m{x}_k  \\
&=& \C{P}_{\Omega_k}(\m{x}_k - \alpha \m{g}^\C{A}(\m{x}_k)) - \m{x}_k .
\end{eqnarray*}
Comparing this to (\ref{1}) gives (\ref{P=}).
\end{proof}
\smallskip

\section{Nondegenerate problems}
\label{nondegenerate}
In this section, we focus on the case where the iterates of PASA converge
to a nondegenerate stationary point; that is, a stationary point $\m{x}^*$
for which there exists a scalar $\pi > 0$ such that
$\lambda_i > \pi$ for all $i \in \C{A}(\m{x}^*)$ and
$\g{\lambda} \in \Lambda(\m{x}^*)$.
\smallskip
\begin{theorem}
\label{nondegenerate-theorem}
If PASA with $\epsilon = 0$ generates an infinite sequence of iterates
that converge to a nondegenerate stationary point $\m{x}^*$,
then within a finite number of iterations, only phase two is executed.
\end{theorem}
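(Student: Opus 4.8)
\emph{Proof plan.}
The plan is to produce a fixed neighborhood $\C{N}$ of $\m{x}^*$ on which the branching test always prefers phase two, that is, $e(\m{x}) \ge \theta E(\m{x})$ for every $\m{x} \in \C{N}$ and every value of $\theta$ encountered by the algorithm. Granting this, since $\m{x}_k \to \m{x}^*$ gives $\m{x}_k \in \C{N}$ for all large $k$, the phase-two exit test $e < \theta E$ never holds (phase two is never left) and the phase-one test $e \ge \theta E$ holds after one GPA step (phase one is left at once); moreover $e \ge \theta E$ makes the decrease test $e < \theta E$ false, so $\theta$ is frozen and the undecided set $\C{U}$ never has to be examined, which is what makes the nondegenerate case cleaner than the degenerate one. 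Throughout write $\C{S}^* = \C{A}(\m{x}^*)$ and $\m{y} = \m{y}(\m{x},1)$, and let $\g{\lambda} = \g{\lambda}(\m{x}) \in \Lambda(\m{x},1)$ be a multiplier as in (\ref{first-order}) with $\alpha = 1$.

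First I would establish active-set identification on $\C{N}$. Continuity gives $\C{A}(\m{x}) \subseteq \C{S}^*$, and since $\m{y} \to \m{x}^*$ by (\ref{lip*}) also $\C{A}(\m{y}) \subseteq \C{S}^*$. For the reverse inclusion I would combine Proposition~\ref{SetClose} with the scaling (\ref{scaling}): $\g{\lambda}$ lies within $c\|\m{x} - \m{x}^*\|$ of some $\g{\lambda}^* \in \Lambda(\m{x}^*)$, and nondegeneracy gives $\lambda_i^* > \pi$, so $\lambda_i > \pi/2 > 0$ for $i \in \C{S}^*$ once $\m{x}$ is close. Complementary slackness in (\ref{first-order}) then forces $\C{S}^* \subseteq \C{A}(\m{y})$, so $\C{A}(\m{y}) = \C{S}^*$ on $\C{N}$.

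Next I would split on $\C{A}(\m{x})$. If $\C{A}(\m{x}) = \C{S}^*$, then $\m{x}$ and $\m{y}$ both satisfy $\m{A}_{\C{S}^*}\m{z} = \m{b}_{\C{S}^*}$, hence $\m{d}^1(\m{x}) = \m{y} - \m{x} \in \C{N}(\m{A}_{\C{S}^*})$; substituting $\m{d}^1(\m{x}) = -\m{g}(\m{x}) - \m{A}\tr\g{\lambda}$ (with $\lambda_i = 0$ off $\C{S}^*$) and projecting orthogonally onto $\C{N}(\m{A}_{\C{S}^*})$ yields $\m{d}^1(\m{x}) = -\m{g}^\C{A}(\m{x})$, so $E(\m{x}) = e(\m{x})$ and $e \ge \theta E$ because $\theta < 1$. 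If instead $\C{A}(\m{x})$ is a proper subset $\C{S}$ of $\C{S}^*$, then $E(\m{x}) \le (2+\kappa)\|\m{x} - \m{x}^*\| \to 0$ by (\ref{lipd}), whereas $e(\m{x}) = \|\C{P}_{\C{N}(\m{A}_\C{S})}\m{g}(\m{x})\|$ stays bounded away from $0$; shrinking $\C{N}$ so that $(2+\kappa)\|\m{x}-\m{x}^*\| < e_{\min}/(2\theta_0)$, where $\theta_0$ is the initial $\theta$ and $e_{\min} = \min\{\|\C{P}_{\C{N}(\m{A}_\C{S})}\m{g}(\m{x}^*)\| : \C{S} \subseteq \C{S}^*,\ \C{S} \neq \C{S}^*\}$, then gives $e(\m{x}) \ge e_{\min}/2 > \theta_0 E(\m{x}) \ge \theta E(\m{x})$ since $\theta \le \theta_0$.

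The hard part is the strict positivity $e_{\min} > 0$, i.e.\ $\C{P}_{\C{N}(\m{A}_\C{S})}\m{g}(\m{x}^*) \ne \m{0}$ for every proper subset $\C{S}$ of $\C{S}^*$, and this is where I expect the real work. I would first show that the strong nondegeneracy hypothesis forces the active gradients $\{\m{A}_i\tr : i \in \C{S}^*\}$ to be linearly independent: a nontrivial dependence is supported on $\C{S}^*$ and lets one move along a line inside $\Lambda(\m{x}^*)$ until an active multiplier reaches $0$, contradicting $\lambda_i > \pi$ for all $i \in \C{S}^*$ and all multipliers. Granting independence, and writing $\m{g}(\m{x}^*) = -\sum_{i \in \C{S}^*}\lambda_i^*\m{A}_i\tr$, the projection $\C{P}_{\C{N}(\m{A}_\C{S})}\m{g}(\m{x}^*) = -\C{P}_{\C{N}(\m{A}_\C{S})}\big(\sum_{i \in \C{S}^*\setminus\C{S}}\lambda_i^*\m{A}_i\tr\big)$ can vanish only if $\sum_{i\in\C{S}^*\setminus\C{S}}\lambda_i^*\m{A}_i\tr$ lies in $\C{N}(\m{A}_\C{S})^\perp$, which would be a dependence among $\{\m{A}_i\tr : i \in \C{S}^*\}$ carrying the strictly positive weights $\lambda_i^*$ on $\C{S}^*\setminus\C{S}$ --- impossible by independence. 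Finiteness of the collection of index sets $\C{S}$ and continuity of $\m{g}$ then make $e$ bounded below near $\m{x}^*$, and combining the two cases yields $e \ge \theta E$ on $\C{N}$, which proves the theorem.
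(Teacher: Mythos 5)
Your proof is correct, and it reaches the key inequality $e(\m{x}_k) \ge \theta E(\m{x}_k)$ by a genuinely different route than the paper. Both arguments begin identically: Proposition~\ref{SetClose} together with the scaling relation (\ref{scaling}) and nondegeneracy give $\C{A}(\m{y}(\m{x},1)) = \C{A}(\m{x}^*) \supseteq \C{A}(\m{x})$ near $\m{x}^*$. From there the paper proves the single uniform bound $E(\m{x}_k) \le e(\m{x}_k)$: since $\m{y}(\m{x}_k,1)$ lies in the face $\Omega_k$ cut out by $\C{A}(\m{x}_k)$, the projection onto $\Omega$ coincides with the projection onto $\Omega_k$, Proposition~\ref{P=prop} lets $\m{g}$ be replaced by $\m{g}^{\C{A}}$, and the variational inequality for that projection gives $\|\m{d}^1(\m{x}_k)\| \le \|\m{g}^{\C{A}}(\m{x}_k)\|$ --- no case analysis, no linear independence, and a threshold $\theta^* = 1$ independent of the initial $\theta$. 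You instead split on whether $\C{A}(\m{x}_k)$ equals $\C{A}(\m{x}^*)$ (where your computation $\m{d}^1(\m{x}_k) = -\m{g}^{\C{A}}(\m{x}_k)$, hence $E = e$, is correct) or is a proper subset (where $E \to 0$ while $e$ stays bounded below). The second case forces you to prove the side lemma that the strong nondegeneracy hypothesis --- all multipliers uniformly positive on $\C{A}(\m{x}^*)$ --- implies linear independence of the active constraint gradients; your lineality argument for that is valid (a nontrivial dependence supported on $\C{A}(\m{x}^*)$ lets you slide within $\Lambda(\m{x}^*)$ until an active component vanishes), and the subsequent finiteness-and-continuity argument for $e_{\min} > 0$ goes through. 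What the paper's route buys is uniformity and economy: one inequality covering both cases, with a constant independent of $\theta_0$. What your route buys is an explicit picture of why the two stationarity measures coincide once the active set is identified, plus the observation, not recorded in the paper, that nondegeneracy in this strong form already entails linear independence of the active constraint gradients.
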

\smallskip
\begin{proof}
%
By (\ref{lip*}) and Proposition~\ref{SetClose},
$\m{y}(\m{x}, 1)$ is close to $\m{x}^*$ and $\Lambda(\m{x}, 1)$
is close to $\Lambda(\m{x}^*)$ when $\m{x}$ is close to $\m{x}^*$.
It follows that for $r$ sufficiently small, we have
\begin{equation}\label{lambda>0}
\lambda_i > 0 \mbox{ for all }
i \in \C{A}(\m{x}^*), \; \g{\lambda} \in \Lambda(\m{x}, 1),
\mbox{ and } \m{x} \in \C{B}_r (\m{x}^*).
\end{equation}
Since $(\m{Ax}^*-\m{b})_i < 0$ for all $i \in \C{F}(\m{x}^*)$, it also
follows from (\ref{lip*}) that we can take $r$ smaller, if necessary,
to ensure that for all $i \in \C{F}(\m{x}^*)$ and
$\m{x} \in \C{B}_r (\m{x}^*)$, we have
\begin{equation}\label{Ax<b}
(\m{Ay}(\m{x}, 1) - \m{b})_i   < 0 \quad \mbox{and} \quad
(\m{Ax}-\m{b})_i < 0.
\end{equation}
By the last condition in (\ref{Ax<b}), we have
\begin{equation}\label{601}
\C{A}(\m{x}) \subset \C{A}(\m{x}^*) \quad \mbox{for all } \m{x} \in
\C{B}_r(\m{x}^*).
\end{equation}
By (\ref{lambda>0}) and (\ref{Ax<b}),
\begin{equation}\label{600}
\C{A}(\m{y}(\m{x}, 1)) = \C{A}(\m{x}^*) \quad
\mbox{for all } \m{x} \in \C{B}_r(\m{x}^*).
\end{equation}
That is, if $\m{x} \in \C{B}_r(\m{x}^*)$ and
$i \in \C{A}(\m{x}^*)$, then by (\ref{lambda>0}) and complementary slackness,
$i$ lies in $\C{A}(\m{y}(\m{x}, 1))$, which implies that
$\m{A}(\m{x}^*) \subset \C{A}(\m{y}(\m{x},1))$.
Conversely, if $i \in \C{F}(\m{x}^*) = \m{A}(\m{x}^*)^c$, then by (\ref{Ax<b}),
$i$ lies in $\C{F}(\m{y}(\m{x}, 1)) = \C{A}(\m{y}(\m{x},1))^c$.
Hence, (\ref{600}) holds.

Choose $K$ large enough that $\m{x}_k \in \C{B}_r(\m{x}^*)$ for all
$k \ge K$.
Since $\C{A}(\m{x}_k) \subset \C{A}(\m{x}^*) = \C{A}(\m{y}(\m{x}_k, 1))$
for all $k \ge K$ by (\ref{601}) and (\ref{600}), it follows that
\begin{equation}\label{602}
\C{P}_\Omega (\m{x}_k - \m{g}(\m{x}_k)) =
\m{y}(\m{x}_k, 1) \in \Omega_k \subset \Omega,
\end{equation}
where $\Omega_k$ is defined in (\ref{omegak}).
The inclusion (\ref{602}) along with Proposition~\ref{P=prop} yield
\[
\C{P}_\Omega (\m{x}_k - \m{g}(\m{x}_k)) =
\C{P}_{\Omega_k} (\m{x}_k - \m{g}(\m{x}_k)) =
\C{P}_{\Omega_k} (\m{x}_k - \m{g}^\C{A}(\m{x}_k)).
\]
We subtract $\m{x}_k$ from both sides and refer to the
definition (\ref{dalpha}) of $\m{d}^\alpha$ to obtain
\begin{eqnarray}
\m{d}^1(\m{x}_k) &=&
\C{P}_{\Omega_k} (\m{x}_k - \m{g}^\C{A}(\m{x}_k)) - \m{x}_k \nonumber \\
&=& \arg \; \min \left\{
\| \m{x}_k - \m{g}^\C{A}(\m{x}_k) - \m{y}\|^2 :
\m{y} \in \Omega_k \right\} - \m{x}_k \nonumber \\
&=& \arg \; \min \left\{
\| \m{z} + \m{g}^\C{A}(\m{x}_k)\|^2 :
\m{z} \in \Omega_k - \m{x}_k \right\} . \label{10}
\end{eqnarray}
Recall that at a local minimizer $\bar{\m{x}}$ of a smooth function
$F$ over the convex set $\Omega_k$, the variational inequality
$\nabla F(\bar{\m{x}}) (\m{x} - \bar{\m{x}}) \ge 0$ holds for all
$\m{x} \in \Omega_k$.
We identify $F$ with the objective in (\ref{10}),
$\bar{\m{x}}$ with $\m{d}^1 (\m{x}_k)$,
and $\m{x}$ with the point $\m{0} \in \Omega_k - \m{x}_k$
to obtain the inequality
\[
\m{d}^1(\m{x}_k)  \tr ( \m{g}^\C{A} (\m{x}_k)  +  \m{d}^1(\m{x}_k) ) \le 0.
\]
Hence,
\begin{eqnarray*}
\|\m{g}^\C{A} (\m{x}_k)  \|^2 &=&
\| \m{g}^\C{A} (\m{x}_k) + \m{d}^1(\m{x}_k)  \|^2
- 2  \m{d}^1(\m{x}_k)  \tr ( \m{g}^\C{A} (\m{x}_k) + \m{d}^1(\m{x}_k) )
+ \| \m{d}^1(\m{x}_k) \|^2 \\
&\ge&  \|\m{d}^1(\m{x}_k)  \|^2.
\end{eqnarray*}
By definition, the left side of this inequality is $e(\m{x}_k)^2$, while
the right side is $E(\m{x}_k)^2$.
Consequently, $E(\m{x}_k) \le e(\m{x}_k)$ when $k \ge K$.
Since $\theta \in (0,1)$, it follows that phase one immediately
branches to phase two, while phase two cannot branch to phase one.
This completes the proof.
\end{proof}
\section{Degenerate problems}
\label{degenerate}
We now focus on a degenerate stationary point $\m{x}^*$ where
there exists $i \in \C{A}(\m{x}^*)$ and $\g{\lambda} \in \Lambda (\m{x}^*)$
such that $\lambda_i = 0$.
We wish to establish a result analogous to Theorem~\ref{nondegenerate-theorem}.
To compensate for the degeneracy, it is assumed that the active constraint
gradients at $\m{x}^*$ are linearly independent;
that is, the rows of $\m{A}$ corresponding to indices $i \in \C{A}(\m{x}^*)$
are linearly independent,
which implies that $\Lambda (\m{x}^*, \alpha)$ is a singleton.
Under this assumption, Proposition~\ref{SetClose} yields the
following Lipschitz property.
\begin{corollary}\label{SetLip}
Suppose $\m{x}^*$ is a stationary point for $(\ref{P})$ and the active
constraint gradients are linearly independent at $\m{x}^*$.
If for some $r > 0$,
$\m{g}$ is Lipschitz continuous in $\C{B}_r (\m{x}^*)$
with Lipschitz constant $\kappa$ and
$\m{x} \in \C{B}_r (\m{x}^*)$ is close enough to $\m{x}^*$ that
$\C{F}(\m{x}^*) \subset \C{F}(\m{y}(\m{x}, \alpha))$ for
some $\alpha \ge 0$, then $\Lambda(\m{x}, \alpha)$ is a singleton and
\[
\| \Lambda(\m{x}, \alpha) - \Lambda (\m{x}^*, \alpha) \|
\le 2c (1 + \kappa \alpha) \|\m{x} - \m{x}^*\|,
\]
where $c$ is independent of $\m{x}$ and depends only on $\m{A}$.
\end{corollary}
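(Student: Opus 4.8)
The plan is to derive Corollary~\ref{SetLip} directly from Proposition~\ref{SetClose} by exploiting the linear independence assumption to pin down the exact structure of the multiplier set. The key observation is that linear independence of the active constraint gradients forces $\Lambda(\m{x}^*,\alpha)$ to be a singleton, and I expect the same reasoning to apply at $\m{x}$, making $\Lambda(\m{x},\alpha)$ a singleton as well. Once both sets are singletons, the distance bound from Proposition~\ref{SetClose} becomes an ordinary norm bound, which is exactly the inequality claimed in the corollary.

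First I would establish that $\Lambda(\m{x}^*,\alpha)$ is a singleton. From the first-order conditions (\ref{first-order}) at $\m{y}=\m{y}(\m{x}^*,\alpha)=\m{x}^*$, any multiplier $\g{\lambda}$ satisfies $\m{A}\tr\g{\lambda}=-(\alpha\m{g}(\m{x}^*))$ together with $\lambda_i=0$ for $i\in\C{F}(\m{x}^*)$. Thus the only possibly nonzero components of $\g{\lambda}$ are those indexed by $\C{A}(\m{x}^*)$, and they must satisfy $\m{A}_{\C{A}(\m{x}^*)}\tr\g{\lambda}_{\C{A}(\m{x}^*)}=-\alpha\m{g}(\m{x}^*)$. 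Since the rows of $\m{A}$ indexed by $\C{A}(\m{x}^*)$ are linearly independent, the columns of $\m{A}_{\C{A}(\m{x}^*)}\tr$ are linearly independent, so this linear system has at most one solution; hence $\Lambda(\m{x}^*,\alpha)$ contains a single point.

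Next I would argue the same for $\Lambda(\m{x},\alpha)$. By hypothesis $\C{F}(\m{x}^*)\subset\C{F}(\m{y}(\m{x},\alpha))$, equivalently $\C{A}(\m{y}(\m{x},\alpha))\subset\C{A}(\m{x}^*)$. For any $\g{\lambda}\in\Lambda(\m{x},\alpha)$ the complementarity condition in (\ref{first-order}) forces $\lambda_i=0$ whenever $i\in\C{F}(\m{y}(\m{x},\alpha))$, so again the only possibly nonzero components are indexed by $\C{A}(\m{y}(\m{x},\alpha))\subset\C{A}(\m{x}^*)$. The first equation in (\ref{first-order}) then reads $\m{A}_{\C{A}(\m{x}^*)}\tr\g{\lambda}_{\C{A}(\m{x}^*)}=-(\m{y}-\m{x}+\alpha\m{g}(\m{x}))$ after zero-padding. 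Linear independence of $\m{A}_{\C{A}(\m{x}^*)}$ again yields uniqueness, so $\Lambda(\m{x},\alpha)$ is a singleton.

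With both sets reduced to single points, the quantity $\mbox{dist}\{\g{\lambda},\Lambda(\m{x}^*,\alpha)\}$ for the unique $\g{\lambda}\in\Lambda(\m{x},\alpha)$ is precisely $\|\Lambda(\m{x},\alpha)-\Lambda(\m{x}^*,\alpha)\|$ with the natural abuse of notation identifying each singleton with its element. Proposition~\ref{SetClose} then gives the bound $2c(1+\kappa\alpha)\|\m{x}-\m{x}^*\|$ immediately, completing the proof. The main obstacle, such as it is, lies in making the reduction to indices in $\C{A}(\m{x}^*)$ cleanly: I must carefully use the inclusion $\C{F}(\m{x}^*)\subset\C{F}(\m{y}(\m{x},\alpha))$ to guarantee that the support of every admissible multiplier at $\m{x}$ lands inside the index set where $\m{A}$ has full row rank, since linear independence is assumed only for the rows indexed by $\C{A}(\m{x}^*)$. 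Everything else is a direct corollary of the proposition.
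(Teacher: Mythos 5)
Your proposal is correct and follows essentially the same route as the paper: use $\C{F}(\m{x}^*)\subset\C{F}(\m{y}(\m{x},\alpha))$ to place the support of every admissible multiplier inside $\C{A}(\m{x}^*)$, invoke linear independence of those rows of $\m{A}$ to conclude both multiplier sets are singletons, and then read off the bound from Proposition~\ref{SetClose}. The only difference is that you spell out the linear-system uniqueness argument that the paper leaves implicit.
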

\smallskip
\begin{proof}
Since $\C{F}(\m{x}^*) \subset \C{F}(\m{y}(\m{x}, \alpha))$, it follows that
$\C{A}(\m{x}^*) \supset \C{A}(\m{y}(\m{x}, \alpha))$.
Hence, the active constraint gradients are linearly independent at
$\m{x}^*$ and at $\m{y}(\m{x}, \alpha)$.
This implies that both
$\Lambda(\m{x}, \alpha)$ and $\Lambda (\m{x}^*, \alpha)$ are singletons, and
Corollary~\ref{SetLip} follows from Proposition~\ref{SetClose}.
\end{proof}

To treat degenerate problems, the convergence theory involves one more
requirement for the linearly constrained optimizer:
\smallskip
\begin{itemize}
\item[F4.]
When branching from phase one to phase two, the first iteration in phase two
is given by an Armijo line search of the following form: Choose $j \ge 0$
as small as possible such that
\begin{eqnarray}
f (\m{x}_{k+1}) &\le& f(\m{x}_k) +
\delta \nabla f(\m{x}_k) (\m{x}_{k+1} - \m{x}_k)
\mbox{ where} \label{p-armijo} \\
\m{x}_{k+1} &=& \C{P}_{\Omega_k} (\m{x}_k - s_k \m{g}^\C{A}(\m{x}_k)),
\quad s_k = \alpha \eta^j,
\nonumber
\end{eqnarray}
with $\Omega_k$ defined in (\ref{omegak}),
$\delta\in (0, 1)$, $\eta \in (0, 1)$, and $\alpha \in (0, \infty)$
(as in the Armijo line search of GPA).
\end{itemize}
\smallskip
As $j$ increases, $\eta^j$ tends to zero and $\m{x}_{k+1}$ approaches
$\m{x}_k$.
Hence, for $j$ sufficiently large,
$i \in \C{F}(\m{x}_k - \alpha \eta^j \m{g}^\C{A}(\m{x}_k))$ if
$i \in \C{F}(\m{x}_k)$.
Since $(\m{A} \m{g}^\C{A}(\m{x}_k))_i = 0$ if $i \in \C{A}(\m{x}_k)$,
it follows that $\m{x}_k - \alpha \eta^j \m{g}^\C{A}(\m{x}_k) \in \Omega_k$
for $j$ sufficiently large, which implies that
$\m{x}_{k+1} =$ $\m{x}_k - \alpha \eta^j \m{g}^\C{A}(\m{x}_k)$;
consequently, for $j$ sufficiently large,
the Armijo line search inequality (\ref{p-armijo}) reduces to the
ordinary Armijo line search condition
\[
f (\m{x}_{k+1}) \le f(\m{x}_k) -
s_k \delta \nabla f(\m{x}_k) \m{g}^\C{A}(\m{x}_k),
\]
which holds for $s_k$ sufficiently small.
The basic difference between the Armijo line search in F4 and the
Armijo line search in GPA is that in F4, the constraints active at
$\m{x}_k$ remain active at $\m{x}_{k+1}$ and F2 holds.
With the additional startup procedure F4 for LCO,
the global convergence result Theorem~\ref{global_theorem} remains
applicable since conditions F1 and F2 are satisfied by the initial
iteration in phase two.

Let $\m{x}^*$ be a stationary point where the active constraint
gradients are linearly independent.
For any given $\m{x} \in \mathbb{R}^n$, we define
\begin{equation}\label{fstar}
\bar{\m{x}} = \arg \; \min_{\m{y}} \{\|\m{x} - \m{y}\| :
(\m{Ay} - \m{b})_i = 0 \mbox{ for all }
i \in \C{A}_+(\m{x}^*) \cup \C{A}(\m{x}) \} ,
\end{equation}
where $\C{A}_+(\m{x}^*) = \{i \in \C{A}(\m{x}^*): \Lambda_i (\m{x}^*) > 0\}$.
If $\m{x}$ is close enough to $\m{x}^*$ that
$\m{A}(\m{x}) \subset \m{A}(\m{x}^*)$, then the feasible set in (\ref{fstar})
is nonempty since $\m{x}^*$ satisfies the constraints;
hence, the projection in (\ref{fstar}) is nonempty when $\m{x}$ is sufficiently
close to $\m{x}^*$.
\smallskip
\begin{lemma}
\label{ratio}
Suppose $\m{x}^*$ is a stationary point where the active constraint gradients
are linearly independent and $f$ is Lipschitz continuously differentiable
in a neighborhood of $\m{x}^*$.
If PASA with $\epsilon=0$ generates an infinite sequence of iterates $\m{x}_k$
converging to $\m{x}^*$, then there exists $c \in \mathbb{R}$ such that
\begin{equation}\label{Aidentify}
\| \m{x}_k - \bar{\m{x}}_k \| \le c \|\m{x}_k - \m{x}^*\|^2
\end{equation}
for all $k$ sufficiently large.
\end{lemma}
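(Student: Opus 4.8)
The plan is to reduce the claim to a bound on the slacks of the strongly active constraints and then to extract the quadratic decay from the active-set machinery of PASA. Throughout I write $\m{a}_i$ for row $i$ of $\m{A}$.

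First I would exploit linear independence to turn the projection $(\ref{fstar})$ into a residual bound. For $k$ large, $\m{x}_k \in \C{B}_r(\m{x}^*)$ forces $\C{A}(\m{x}_k) \subseteq \C{A}(\m{x}^*)$, so the index set $\C{I}_k := \C{A}_+(\m{x}^*) \cup \C{A}(\m{x}_k)$ is contained in $\C{A}(\m{x}^*)$ and the rows $\m{A}_{\C{I}_k}$ are linearly independent. Since $\bar{\m{x}}_k$ is the Euclidean projection of $\m{x}_k$ onto the affine set $\{\m{y} : \m{A}_{\C{I}_k}\m{y} = \m{b}_{\C{I}_k}\}$, the explicit projection formula gives $\|\m{x}_k - \bar{\m{x}}_k\| \le c \|(\m{A}\m{x}_k - \m{b})_{\C{I}_k}\|$, where $c$ bounds $\|(\m{A}_\C{I}\m{A}_\C{I}\tr)^{-1}\m{A}_\C{I}\|$ over the finitely many admissible index sets $\C{I} \subseteq \C{A}(\m{x}^*)$ and hence depends only on $\m{A}$. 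Because $(\m{A}\m{x}_k - \m{b})_i = 0$ for every $i \in \C{A}(\m{x}_k)$, the residual involves only indices $i \in \C{A}_+(\m{x}^*) \setminus \C{A}(\m{x}_k)$, so it suffices to prove $(\m{b} - \m{A}\m{x}_k)_i \le c\|\m{x}_k - \m{x}^*\|^2$ for each such $i$.

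Next I would locate these constraints at the projected point. For $i \in \C{A}_+(\m{x}^*)$ the unique multiplier satisfies $\Lambda_i(\m{x}^*) > 0$, and by Corollary~\ref{SetLip}, $\lambda_i(\m{x}_k) \to \Lambda_i(\m{x}^*)$ as $\m{x}_k \to \m{x}^*$; thus $\lambda_i(\m{x}_k) > 0$ for $k$ large. The complementary slackness condition in $(\ref{first-order})$ then puts $i \in \C{A}(\m{y}(\m{x}_k, 1))$, so that $\m{a}_i\tr \m{y}(\m{x}_k, 1) = b_i = \m{a}_i\tr \m{x}^*$ (using $i \in \C{A}(\m{x}^*)$). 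Subtracting $\m{a}_i\tr \m{x}_k$ gives $(\m{b} - \m{A}\m{x}_k)_i = \m{a}_i\tr(\m{x}^* - \m{x}_k) = (\m{A}\m{d}^1(\m{x}_k))_i$; equivalently, the quantity to be bounded is the component of the error $\m{x}_k - \m{x}^*$ along the normal $\m{a}_i$, i.e.\ the distance of $\m{x}_k$ to the strongly active manifold.

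The hard part is upgrading this from the linear estimate $|(\m{A}\m{d}^1(\m{x}_k))_i| \le \|\m{A}\| E(\m{x}_k) \le c\|\m{x}_k - \m{x}^*\|$, which is all that $(\ref{lipd})$ and nonexpansiveness supply, to the quadratic bound, since generic projection estimates only place $\m{x}_k$ within $O(E(\m{x}_k))$ of the strongly active manifold. I would draw the extra order from the branching rules. Because $\lambda_i(\m{x}_k)$ is bounded below by $\Lambda_i(\m{x}^*)/2$ while $E(\m{x}_k) \to 0$, we have $\lambda_i(\m{x}_k) \ge E(\m{x}_k)^\gamma$ for $k$ large, so $i \in \C{U}(\m{x}_k)$ precisely when $(\m{b} - \m{A}\m{x}_k)_i \ge E(\m{x}_k)^\beta$. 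I would argue that the undecided set is empty for $k$ large---otherwise the decay rule for $\theta$ in phase one, together with the branching structure governing the convergent sequence, is contradicted---which forces $(\m{b} - \m{A}\m{x}_k)_i < E(\m{x}_k)^\beta$. This thresholding must then be combined with conditions F2 and the start-up F4, which keep any activated strongly active constraint active through phase two, and with the $C^{1,1}$ expansion of $f$, whose Lipschitz remainder for $\nabla f$ supplies the genuine $\|\m{x}_k - \m{x}^*\|^2$ term and replaces the exponent $\beta \in (1,2)$ by $2$. Controlling this interplay between the undecided-set threshold $E(\m{x}_k)^\beta$ and the evolution of the active set across the phase-one/phase-two branches is precisely where I expect the real difficulty to lie, and it is the step that genuinely uses the design of PASA rather than generic projection inequalities.
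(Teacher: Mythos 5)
Your first two reduction steps are fine and mirror ingredients of the paper's argument: the Hoffman-type bound $\|\m{x}_k - \bar{\m{x}}_k\| \le c\,\max\{(\m{b}-\m{Ax}_k)_i : i \in \C{A}_+(\m{x}^*)\cup\C{A}(\m{x}_k)\}$ appears as (\ref{101}), and the multiplier stability of Corollary~\ref{SetLip} plus complementary slackness is used in the paper exactly as you use it. But the step where you "draw the extra order from the branching rules" has a genuine gap, and in fact is circular. In the paper, the emptiness of $\C{U}(\m{x}_k)$ for large $k$ is Lemma~\ref{undecided-lemma}, and its proof \emph{uses} Lemma~\ref{ratio} (to bound $(\m{b}-\m{Ax}_k)_i$ for $i \in \C{A}_+(\m{x}^*)$) together with Lemma~\ref{stability}, which requires the strong second-order sufficient optimality condition --- a hypothesis not available in Lemma~\ref{ratio}. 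Moreover, nothing in PASA's design forces $\C{U}(\m{x}_k)$ to be empty: when $\C{U}(\m{x}_k) \ne \emptyset$ the algorithm simply refrains from decreasing $\theta$, which produces no contradiction with convergence, so your proposed reductio has no engine. And even if emptiness were granted, it yields only $(\m{b}-\m{Ax}_k)_i < E(\m{x}_k)^\beta$ with $\beta \in (1,2)$; since the available estimate runs in the direction $E(\m{x}_k) \le (2+\kappa)\|\m{x}_k - \m{x}^*\|$, this gives $O(\|\m{x}_k-\m{x}^*\|^\beta)$, strictly weaker than (\ref{Aidentify}), and the appeal to a $C^{1,1}$ Taylor remainder to "replace $\beta$ by $2$" names no actual mechanism.

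The paper's proof obtains the quadratic bound from an entirely different source: the Armijo line search of GPA. First it shows that once a constraint $i \in \C{A}_+(\m{x}^*)$ becomes active at some iterate, it stays active forever (by multiplier positivity, complementary slackness along the projection segment, and F2/F4 in phase two); so letting $\C{I}$ be the limit of $\C{A}_+(\m{x}^*)\cap\C{A}(\m{x}_k)$, the bound is trivial when $\C{I} = \C{A}_+(\m{x}^*)$. When $\C{I}$ is strictly smaller, a case analysis shows that eventually only GPA can be generating iterates (pure LCO and infinitely many phase switches are both ruled out by multiplier arguments). Then comes the key step you are missing: if (\ref{Aidentify}) failed with a large constant $c$ as in (\ref{clower}), the multiplier identity (\ref{first-order}) gives $\m{g}_k\tr\m{d}_k \le -0.5\,\Lambda_i(\m{x}^*)(\m{b}-\m{Ax}_k)_i$ as in (\ref{key5}), which via (\ref{101}) makes $\m{g}_k\tr\m{d}_k \le -cd\|\m{x}_k-\m{x}^*\|^2 \le -cd\|\m{d}_k\|^2/(2+\alpha\kappa)$; for $c$ large this forces the unit Armijo step to be accepted, so $\m{x}_{k+1} = \m{y}(\m{x}_k,\alpha)$, at which every constraint in $\C{A}_+(\m{x}^*)$ is active by Corollary~\ref{SetLip} and complementary slackness --- contradicting $\C{I} \subsetneq \C{A}_+(\m{x}^*)$. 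In short, failure of the quadratic bound at even one late iterate permanently identifies all strongly active constraints, which is the self-defeating mechanism that proves the lemma; the undecided set plays no role here and only enters downstream.
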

\smallskip
\begin{proof}
Choose $r$ in accordance with Corollary~\ref{SetLip}.
Similar to what is done in the proof of Theorem~\ref{nondegenerate-theorem},
choose $r > 0$ smaller if necessary to ensure that
for all $\m{x} \in \C{B}_r (\m{x}^*)$, we have
\begin{equation}\label{700}
\Lambda_i (\m{x}, \alpha) > 0 \mbox{ for all }
i \in \C{A}_+(\m{x}^*),
\end{equation}
and
\begin{equation}\label{701}
(\m{Ay}(\m{x}, \alpha) - \m{b})_j < 0, \quad (\m{Ax}-\m{b})_j < 0,
\mbox{ for all } j \in \C{F}(\m{x}^*).
\end{equation}
Choose $K$ large enough that $\m{x}_k \in \C{B}_r(\m{x}^*)$ for all $k \ge K$
and suppose that $\m{x}_k$ is any PASA iterate with $k \ge K$.
If $i \in \C{A}_+(\m{x}^*) \cap \C{A}(\m{x}_k)$, then by
(\ref{700}), $i \in \C{A}(\m{y}(\m{x}_k, \alpha))$ by complementary slackness.
Hence, $i \in \C{A}(\m{x})$ for all $\m{x}$ on the line segment
connecting $\m{x}_k$ and $\m{y}(\m{x}_k, \alpha)$.
In particular, $i \in \C{A}(\m{x}_{k+1})$ if $\m{x}_{k+1}$ is generated
by GPA in phase one, while
$i \in \C{A}(\m{x}_{k+1})$ by F2 if $\m{x}_{k+1}$ is generated in phase two.
It follows that if
constraint $i \in \C{A}_+(\m{x}^*)$ becomes active at iterate $\m{x}_k$,
then $i \in \C{A} (\m{x}_l)$ for all $l \ge k$.
Let $\C{I}$ be the limit of $\C{A}_+(\m{x}^*) \cap \C{A}(\m{x}_k)$
as $k$ tends to infinity;
choose $K$ larger if necessary to ensure that
$\C{I} \subset \C{A}(\m{x}_k)$ for all $k \ge K$ and suppose that
$\m{x}_k$ is any iterate of PASA with $k \ge K$.
If $\C{I} = \C{A}_+(\m{x}^*)$, then since $\C{I} \subset \C{A}(\m{x}_k)$,
it follows that
$\C{A}_+(\m{x}^*) \cup \C{A}(\m{x}_k) = \C{A}(\m{x}_k)$,
which implies that $\bar{\m{x}}_k = \m{x}_k$.
Thus (\ref{Aidentify}) holds trivially since the left side vanishes.
Let us focus on the nontrivial case where $\C{I}$ is strictly contained in
$\C{A}_+(\m{x}^*)$.
The analysis is partitioned into three cases.

{\bf Case 1.} {\em For $k$ sufficiently large,
$\m{x}_k$ is generated solely by LCO.}
By F3 it follows that for any $\epsilon > 0$, there exists $k \ge K$ such that
$\|\m{g}^\C{A}(\m{x}_k)\| = e (\m{x}_k) \le \epsilon$.
By the first-order optimality conditions for $\m{g}^\C{A}(\m{x}_k)$,
there exists $\g{\mu}_k \in \mathbb{R}^m$,
with $\mu_{ki} = 0$ for all $i \in \C{F}(\m{x}_k)$,
such that
\begin{equation}\label{702}
\| \m{g} (\m{x}_k) + \m{A}\tr \g{\mu}_k \| \le \epsilon.
\end{equation}
The multiplier $\g{\mu}_k$ is unique by
the independence of the active constraint gradients and the fact that
$\C{A}(\m{x}_k) \subset \C{A}(\m{x}^*)$ by the last condition in (\ref{701}).
Similarly, at $\m{x}^*$ we have
$\m{g} (\m{x}^*) + \m{A}\tr \g{\lambda}^* = \m{0}$,
where $\g{\lambda}^* = \Lambda(\m{x}^*)$.
Combine this with (\ref{702}) to obtain
\begin{equation}\label{703}
\|\m{A}\tr (\g{\mu}_k - \g{\lambda}^*)\| \le \epsilon +
\|\m{g}(\m{x}_k) - \m{g}(\m{x}^*)\| \le \epsilon + \kappa \|\m{x}_k - \m{x}^*\|.
\end{equation}
Since $\C{F}(\m{x}^*) \subset \C{F}(\m{x}_k)$ by
the last condition in (\ref{701}), it follows from complementary slackness
that $\mu_{ki} = \lambda_i^* = 0$ for all $i \in \C{F}(\m{x}^*)$.
Since the columns of $\m{A}\tr$ corresponding to indices in $\C{A}(\m{x}^*)$
are linearly independent, there exists a constant $c$ such that
\begin{equation}\label{704}
\|\g{\mu}_k - \g{\lambda}^*\| \le c \|\m{A}\tr (\g{\mu}_k - \g{\lambda}^*)\|.
\end{equation}
Hence, for $\epsilon$ sufficiently small and $k$ sufficiently large,
it follows from (\ref{703}) and (\ref{704}) that
$\mu_{ki} > 0$ for all $i \in \C{A}_+(\m{x}^*)$, which contradicts the
assumption that $\C{I}$ is strictly contained in $\C{A}_+(\m{x}^*)$.
Consequently, case 1 cannot occur.

{\bf Case 2.} {\em PASA makes an infinite number of branches from phase one
to phase two and from phase two to phase one.}
Let us consider the first iteration of phase two.
By Proposition~\ref{P=prop} and the definition of $\m{x}_{k+1}$ in F4,
we have
\[
\m{x}_{k+1} = \C{P}_{\Omega_k} (\m{x}_k - s_k \m{g}(\m{x}_k)).
\]
The first-order optimality condition for $\m{x}_{k+1}$ is that there
exists $\g{\mu}_k \in \mathbb{R}^m$ such that
\[
\m{x}_{k+1} - \m{x}_k + s_k \m{g}(\m{x}_k) + \m{A}\tr \g{\mu}_k
= \m{0},
\]
where $\mu_{ki} = 0$ for all $i \in \C{F}(\m{x}_{k+1}) \supset \C{F}(\m{x}^*)$.
Subtracting from this the identity
\[
s_k \m{g}(\m{x}^*) + \m{A}\tr (s_k \g{\lambda}^*) = \m{0}
\]
yields
\begin{equation}\label{800}
\m{A}\tr (\g{\mu}_k - s_k \g{\lambda}^*) =
\m{x}_k - \m{x}_{k+1} + s_k (\m{g}(\m{x}^*) - \m{g}(\m{x}_k)) .
\end{equation}
By the Lipschitz continuity of $\m{g}$,
the bound $s_k \le \alpha$ in F4,
and the assumption that the $\m{x}_k$
converge to $\m{x}^*$, the right side of (\ref{800})
tends to zero as $k$ tends to infinity.
Exploiting the independence of the active constraint gradients and
the identity $\mu_{ki} = \lambda_i^* = 0$ for all $i \in \C{F}(\m{x}^*)$,
we deduce from (\ref{704}) and (\ref{800}) that
$\|\g{\mu}_k - s_k \g{\lambda}^*\|$ tends to 0 as $k$ tends to
infinity.
It follows that for each $i$, $\mu_{ki} - s_k \lambda_i^*$
tends to zero.
If $s_k$ is uniformly bounded away from 0,
then $\mu_{ki} > 0$ when $i \in \C{A}_+(\m{x}^*)$.
By complementary slackness, $\C{I} = \C{A}_+(\m{x}^*)$, which
would contradict the assumption that $\C{I}$ is strictly
contained in $\C{A}_+(\m{x}^*)$.
Consequently, case 2 could not occur.

We will now establish a positive lower bound for $s_k$ in F4 of phase two.
If the Armijo stepsize terminates at $j = 0$, then $s_k = \alpha > 0$,
and we are done.
Next, suppose the stepsize terminates at $j \ge 1$.
Since $j$ is as small as possible, it follows from
Proposition~\ref{P=prop} and F4 that
\begin{equation}\label{900}
f(\m{x}_k + \m{d}_k) - f(\m{x}_k) > \delta \m{g}_k\tr\m{d}_k,
\end{equation}
where
\[
\m{d}_k =\C{P}_{\Omega_k} (\m{x}_k - \beta \m{g}_k) - \m{x}_k, \quad
\beta := s_k/\eta \le \alpha.
\]
The inequality $\beta \le \alpha$ holds since $j \ge 1$.
Since $\m{x}^* \in \Omega_k \subset \Omega$ by the second
condition in (\ref{701}), we have
\[
\C{P}_{\Omega_k} (\m{x}^*-\beta\m{g}(\m{x}^*)) = \m{x}^*.
\]
Since the projection onto a convex set is a nonexpansive operator,
we obtain
\[
\|(\m{x}_k + \m{d}_k) - \m{x}^*\| =
\|\C{P}_{\Omega_k}(\m{x}_k - \beta \m{g}(\m{x}_k)) -
\C{P}_{\Omega_k}(\m{x}^* - \beta \m{g}(\m{x}^*)) \| \le
(1 + \alpha \kappa) \|\m{x}_k - \m{x}^*\|.
\]
The right side of this inequality tends to zero as $k$ tends to infinity.
Choose $k$ large enough that
$\m{x}_k + \m{d}_k$ is within the ball centered at $\m{x}^*$ where
$f$ is Lipschitz continuously differentiable.

Let us expand $f$ in a Taylor series around $\m{x}_k$ to obtain
\begin{eqnarray}
f(\m{x}_k + \m{d}_k) - f(\m{x}_k)  &=& \int_{0}^1
f'(\m{x}_k + t\m{d}_k) dt \nonumber \\
&=& \m{g}_k\tr\m{d}_k + \int_{0}^1
(\nabla f(\m{x}_k + t\m{d}_k) - \nabla f (\m{x}_k))\m{d}_k dt \nonumber \\
&\le& \m{g}_k\tr\m{d}_k +
0.5 \kappa\|\m{d}_k\|^2 . \label{904}
\end{eqnarray}
This inequality combined with (\ref{900}) yields
\begin{equation}\label{901}
(1-\delta)\m{g}_k\tr\m{d}_k + 0.5 \kappa \|\m{d}_k\|^2 > 0.
\end{equation}
As in (\ref{g1}), but with $\Omega$ replaced by $\Omega_k$, we have
\begin{equation}\label{902}
\m{g}_k\tr\m{d}_k \le - \|\m{d}_k\|^2/\beta.
\end{equation}
Note that $\m{d}_k \ne \m{0}$ due to (\ref{900}).
Combine (\ref{901}) and (\ref{902}) and replace $\beta$ by $s_k/\eta$
to obtain
\begin{equation}\label{903}
s_k > 2(1-\delta)\eta/\kappa.
\end{equation}
Hence,
if $j \ge 1$ in F4, then $s_k$ has the lower bound given in (\ref{903})
for $k$ sufficiently large, while $s_k = \alpha$ if $j = 0$.
This completes the proof of case 2.

{\bf Case 3.} {\em For $k$ sufficiently large, $\m{x}_k$ is generated
solely by GPA.}
The Taylor expansion (\ref{904}) can be written
\begin{equation}
f(\m{x}_k + \m{d}_k) = f(\m{x}_k) +
\delta\m{g}_k\tr\m{d}_k +
(1-\delta)\m{g}_k\tr\m{d}_k +
0.5 \kappa\|\m{d}_k\|^2, \label{100}
\end{equation}
where $\m{d}_k = \C{P}_\Omega (\m{x}_k - \alpha \m{g}(\m{x}_k)) - \m{x}_k$
is as defined in GPA.
If (\ref{Aidentify}) is violated, then for any choice of $c > 0$,
there exists $k\ge K$ such that
\begin{equation}\label{clower}
\| \m{x}_k - \bar{\m{x}}_k \| > c \|\m{x}_k - \m{x}^*\|^2 .
\end{equation}
By taking $c$ sufficiently large, we will show that
\begin{equation}\label{key0}
(1-\delta)\m{g}_k\tr\m{d}_k + 0.5 \kappa\|\m{d}_k\|^2 \le 0.
\end{equation}
In this case, (\ref{100}) implies that
$s_k = 1$ is accepted in GPA and
\[
\m{x}_{k+1} = \C{P}_\Omega (\m{x}_k - \alpha \m{g}(\m{x}_k)).
\]
By Corollary~\ref{SetLip},
$\|\Lambda(\m{x}_k, \alpha) - \Lambda(\m{x}^*,\alpha)\| =$
$\|\Lambda(\m{x}_k, \alpha) - \alpha \g{\lambda}^*\|$
tends to 0 as $k$ tends to infinity.
This implies that $\Lambda_{i}(\m{x}_k, \alpha) > 0$ when $\lambda_i^* > 0$,
which contradicts the assumption that $\C{I}$ is strictly contained in
$\C{A}_+(\m{x}^*)$.
Hence, (\ref{Aidentify}) cannot be violated.

To establish (\ref{key0}), first observe that
\begin{eqnarray}
\|\m{d}_k\| &=& \|\m{y}(\m{x}_k, \alpha) - \m{x}_k\| \le
\|\m{y}(\m{x}_k, \alpha) - \m{x}^*\| +
\|\m{x}^* - \m{x}_k\| \nonumber \\
&\le & (2+\alpha\kappa)\|\m{x}_k - \m{x}^*\|
\label{key1}
\end{eqnarray}
by (\ref{lip*}).
By the first-order optimality condition (\ref{first-order})
for $\m{y}(\m{x}_k, \alpha)$, it follows that
\[
\m{d}_k = -(\alpha \m{g}(\m{x}_k) + \m{A}\tr \Lambda (\m{x}_k, \alpha)).
\]
The dot product of this equation with $\m{d}_k$ gives
\begin{equation}\label{key2}
(\alpha \m{g}_k + \m{A}\tr \Lambda(\m{x}_k, \alpha))\tr \m{d}_k =
-\|\m{d}_k\|^2 \le 0.
\end{equation}
Again, by the definition of $\m{d}_k$ and by complementary slackness,
we have
\begin{equation}\label{key3}
\Lambda(\m{x}_k, \alpha)\tr\m{Ad}_k =
\Lambda(\m{x}_k, \alpha)\tr\m{A}(\m{y}(\m{x}_k,\alpha) - \m{x}_k) =
\Lambda(\m{x}_k, \alpha)\tr(\m{b} - \m{A}\m{x}_k) .
\end{equation}
By Corollary~\ref{SetLip}, it follows that for $K$ sufficiently large and
for any $k \ge K$,
\[
\Lambda_i(\m{x}_k, \alpha) \ge 0.5 \Lambda_i(\m{x}^*, \alpha)
\mbox{ for all } i \in \C{A}_+(\m{x}^*) .
\]
Hence, for any $i \in \C{A}_+(\m{x}^*)$ and $k \ge K$, (\ref{key3}) gives
\begin{equation}\label{key4}
\Lambda(\m{x}_k, \alpha)\tr\m{A}\m{d}_k \ge
0.5\Lambda_i(\m{x}^*, \alpha)(\m{b} - \m{A}\m{x}_k)_i
= 0.5 \alpha \Lambda_i (\m{x}^*)(\m{b} - \m{A}\m{x}_k)_i
\end{equation}
since $\Lambda(\m{x}_k, \alpha) \ge \m{0}$, $\m{A}\m{x}_k \le \m{b}$,
and each term in the inner product
$\Lambda(\m{x}_k, \alpha)\tr(\m{b} - \m{A}\m{x}_k)$ is nonnegative.
Combine (\ref{key2})--(\ref{key4}) to obtain
\begin{equation}\label{key5}
\m{g}_k\tr \m{d}_k \le
-0.5 \Lambda_i (\m{x}^*)(\m{b} - \m{A}\m{x}_k)_i
\end{equation}
for any $i \in \C{A}_+(\m{x}^*)$ and $k \ge K$.
The distance $\|\m{x}_k - \bar{\m{x}}_k\|$ between
$\m{x}_k$ and its projection $\bar{\m{x}}_k$ in (\ref{fstar})
is bounded by a constant times the
maximum violation of the constraint $(\m{b} - \m{Ax}_k)_i = 0$
for $i \in \C{A}_+(\m{x}^*) \cup \C{A}(\m{x}_k)$; that is,
there exists a constant $\bar{c}$ such that
\begin{equation}\label{101}
\|\m{x}_k - \bar{\m{x}}_k\| \le \bar{c}
\max \{ (\m{b} - \m{Ax}_k)_i : i \in \C{A}_+(\m{x}^*) \cup \C{A}(\m{x}_k) \}.
\end{equation}
Since $(\m{b} - \m{Ax}_k)_i = 0$ for all $i \in \C{A}(\m{x}_k)$,
it follows that the maximum constraint violation in (\ref{101}) is achieved for
some $i \in \C{A}_+(\m{x}^*)$
(otherwise, $\bar{\m{x}}_k = \m{x}_k$ and (\ref{clower}) is violated).
Consequently, if the index $i \in \C{A}_+(\m{x}^*)$ in (\ref{key5})
is chosen to make $(\m{b} - \m{Ax}_k)_i$ as large as possible, then
\[
\m{g}_k\tr \m{d}_k \le -d\|\m{x}_k - \bar{\m{x}}_k\|, \quad
\mbox{where } d = 0.5 \bar{c} \min \{ \Lambda_i (\m{x}^*) :
i \in \C{A}_+(\m{x}^*) \}.
\]
If (\ref{clower}) holds, then by (\ref{key1}), we have
\[
\m{g}_k\tr \m{d}_k \le -c d \|\m{x}_k - \m{x}^*\|^2 \le
-c d \|\m{d}_k\|^2 / (2 + \alpha \kappa ).
\]
Hence, the expression (\ref{key0}) has the upper bound
\[
(1-\delta)\m{g}_k\tr\m{d}_k + 0.5\kappa\|\m{d}_k\|^2 \le
\left( \frac{cd (\delta - 1)}{2 + \alpha \kappa} +0.5\kappa \right) \|\m{d}_k\|^2.
\]
Since $\delta < 1$, this is nonpositive when $c$ is sufficiently large.
This completes the proof of (\ref{key0}).
\end{proof}

Note that there is a fundamental difference between the
prototype GPA used in this paper and the versions of the
gradient projection algorithm based on a
piecewise projected gradient such as those in \cite{bm88, bm94, bmt90}.
In GPA there is a single projection followed by
a backtrack towards the starting point.
Consequently, we are unable to show that the active constraints
are identified in a finite number of iterations,
unlike the piecewise projection schemes,
where the active constraints can be identified in a finite number
of iterations, but at the expense additional projections when the
stepsize increases.
In Lemma \ref{ratio} we show that even though we do not identify the
active constraints, the violation of the constraints
$(\m{Ax} - \m{b})_i = 0$ for $i \in \C{A}_+(\m{x}^*)$ by iterate
$\m{x}_k$ is on the order of the error in $\m{x}_k$ squared.

When $\m{x}^*$ is fully determined by the active constraints for which the
strict complementarity holds, convergence is achieved
in a finite number of iterations as we now show.
\smallskip
\begin{corollary}
Suppose $\m{x}^*$ is a stationary point where the active constraint gradients
are linearly independent and $f$ is Lipschitz continuously differentiable
in a neighborhood of $\m{x}^*$.
If the PASA iterates $\m{x}_k$ converge to $\m{x}^*$ and
$|\C{A}_+(\m{x}^*)| = n$,
then $\m{x}_k = \m{x}^*$ after a finite number of iterations.
\end{corollary}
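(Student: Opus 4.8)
The plan is to combine the linear independence of the constraints in $\C{A}_+(\m{x}^*)$ with the quadratic identification estimate (\ref{Aidentify}) of Lemma~\ref{ratio}. The decisive observation is that the hypothesis $|\C{A}_+(\m{x}^*)| = n$, together with the assumed linear independence of the active constraint gradients at $\m{x}^*$, forces the submatrix $\m{A}_{\C{A}_+(\m{x}^*)}$ to be square and nonsingular. Hence the linear system $\m{A}_{\C{A}_+(\m{x}^*)} \m{y} = \m{b}_{\C{A}_+(\m{x}^*)}$ has the unique solution $\m{y} = \m{x}^*$, so that $\m{x}^*$ is completely determined by its strict-complementarity constraints.

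First I would show that $\bar{\m{x}}_k = \m{x}^*$ for all $k$ sufficiently large. Since $\m{x}_k \to \m{x}^*$ and inactive constraints remain inactive in a neighborhood of $\m{x}^*$, we have $\C{A}(\m{x}_k) \subset \C{A}(\m{x}^*)$ once $k$ is large; this is exactly the containment already used in (\ref{601}) and guaranteed by the last condition in (\ref{701}). Consequently $\C{A}_+(\m{x}^*) \cup \C{A}(\m{x}_k) \subset \C{A}(\m{x}^*)$, so $\m{x}^*$ satisfies every equality constraint appearing in the definition (\ref{fstar}) of $\bar{\m{x}}_k$. Because the constraints indexed by $\C{A}_+(\m{x}^*)$ alone already pin down $\m{y} = \m{x}^*$, the feasible set in (\ref{fstar}) collapses to the single point $\{\m{x}^*\}$, and therefore its nearest point $\bar{\m{x}}_k$ to $\m{x}_k$ equals $\m{x}^*$.

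Next I would invoke Lemma~\ref{ratio}, which supplies a constant $c$ with $\|\m{x}_k - \bar{\m{x}}_k\| \le c\|\m{x}_k - \m{x}^*\|^2$ for all $k$ large. Substituting $\bar{\m{x}}_k = \m{x}^*$ gives $\|\m{x}_k - \m{x}^*\| \le c\|\m{x}_k - \m{x}^*\|^2$. Writing $t_k = \|\m{x}_k - \m{x}^*\|$ and taking $c > 0$ without loss of generality, this reads $t_k(1 - c t_k) \le 0$, so for each $k$ either $t_k = 0$ or $t_k \ge 1/c$. But $\m{x}_k \to \m{x}^*$ forces $t_k < 1/c$ for all large $k$, leaving only $t_k = 0$; that is, $\m{x}_k = \m{x}^*$ after finitely many iterations.

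I do not expect a serious obstacle, since the result is essentially immediate once Lemma~\ref{ratio} and the estimate (\ref{Aidentify}) are available. The only step requiring genuine care is the reduction of the feasible set in (\ref{fstar}) to a single point: one must verify both that the $n$ independent rows of $\m{A}_{\C{A}_+(\m{x}^*)}$ render the equality system square and nonsingular, and that $\m{x}^*$ stays feasible for the enlarged index set $\C{A}_+(\m{x}^*) \cup \C{A}(\m{x}_k)$, the latter resting on the inclusion $\C{A}(\m{x}_k) \subset \C{A}(\m{x}^*)$ for $k$ sufficiently large.
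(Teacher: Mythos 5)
Your proposal is correct and follows essentially the same route as the paper: establish that $\bar{\m{x}}_k = \m{x}^*$ because the $n$ linearly independent equality constraints indexed by $\C{A}_+(\m{x}^*)$ determine $\m{x}^*$ uniquely, then apply the quadratic bound of Lemma~\ref{ratio} to force $\m{x}_k = \m{x}^*$ once $\|\m{x}_k - \m{x}^*\| < 1/c$. Your write-up simply spells out in more detail the steps the paper leaves implicit.
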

\smallskip
\begin{proof}
Choose $k$ large enough that $\C{A}(\m{x}_k) \subset \C{A}(\m{x}^*)$
and $\bar{\m{x}}_k$ is nonempty.
Since $|\C{A}_+(\m{x}^*)| = n$ and the active constraint gradients are
linearly independent, we have $\bar{\m{x}}_k = \m{x}^*$.
By Lemma~\ref{ratio}, we must have $\m{x}_k = \m{x}^*$ whenever
$\|\m{x}_k - \m{x}^*\| < 1/c$.
\end{proof}
\smallskip

To complete the analysis of PASA in the degenerate case and show
that PASA ultimately performs only iterations in phase two, we also need
to assume that the strong second-order sufficient optimality condition holds.
Recall that a stationary point $\m{x}^*$ of (\ref{P}) satisfies the
strong second-order sufficient optimality condition if there exists
$\sigma > 0$ such that
\begin{equation} \label{opt2}
\m{d} \tr \nabla^2 f(\m{x}^*) \m{d} \ge \sigma \|\m{d}\|^2  \quad
\mbox{whenever} \quad (\m{Ad})_i = 0 \mbox{ for all } i \in \C{A}_+(\m{x}^*).
\end{equation}
First, we observe that under this assumption, the distance from
$\m{x}_k$ to $\m{x}^*$ is bounded in terms of $E(\m{x}_k)$.

\begin{lemma}\label{stability}
If $f$ is twice continuously differentiable in a neighborhood of
a local minimizer $\m{x}^*$ for $(\ref{P})$ where
the active constraint gradients are linearly independent
and the strong second-order sufficient optimality condition holds,
then for some $\rho > 0$ and for all $\m{x} \in \C{B}_\rho(\m{x}^*)$, we have
\begin{equation}\label{loc-bound}
\|\m{x} - \m{x}^*\| \le
\left[ \sqrt{1 + \left( \frac{2(1 + \kappa)(3+\kappa)}{\sigma} \right)^2 }
\; \right] E(\m{x}) ,
\end{equation}
where $\kappa$ is a Lipschitz constant for $\nabla f$ on
$\C{B}_{\rho}(\m{x}^*)$.
\end{lemma}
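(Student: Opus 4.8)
The plan is to establish the bound (\ref{loc-bound}) by decomposing the error $\m{x} - \m{x}^*$ into a component lying in the tangent subspace $\C{T} = \{\m{d} : (\m{Ad})_i = 0 \mbox{ for all } i \in \C{A}_+(\m{x}^*)\}$, where the strong second-order condition (\ref{opt2}) provides coercivity of the Hessian, and a complementary component that is controlled directly by the constraint residuals. Write $\m{x} - \m{x}^* = \m{u} + \m{w}$ where $\m{u} \in \C{T}$ and $\m{w}$ lies in the orthogonal complement, which is spanned by the active constraint gradients $\{\m{A}_i\tr : i \in \C{A}_+(\m{x}^*)\}$. Since these gradients are linearly independent, $\|\m{w}\|$ is bounded by a constant times $\max_i |(\m{A}(\m{x}-\m{x}^*))_i|$ over $i \in \C{A}_+(\m{x}^*)$; and because $(\m{Ax}^* - \m{b})_i = 0$ on this index set, each such quantity equals $|(\m{Ax} - \m{b})_i|$, a measure of how much $\m{x}$ violates the active constraints. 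The first subtask, then, is to bound these active-constraint residuals in terms of $E(\m{x}) = \|\m{d}^1(\m{x})\|$.

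To control the tangential component $\m{u}$, I would use a Taylor expansion of $\nabla f$ around $\m{x}^*$ together with the stationarity of $\m{x}^*$. Since $\m{x}^*$ is a stationary point, the first-order conditions give $\m{g}(\m{x}^*) = -\m{A}\tr\g{\lambda}^*$ with $\lambda_i^* > 0$ only for $i \in \C{A}_+(\m{x}^*)$. Testing the expansion $\m{g}(\m{x}) - \m{g}(\m{x}^*) = \nabla^2 f(\m{x}^*)(\m{x}-\m{x}^*) + o(\|\m{x}-\m{x}^*\|)$ against $\m{u} \in \C{T}$ and invoking (\ref{opt2}) should yield $\sigma\|\m{u}\|^2 \le \m{u}\tr(\m{g}(\m{x}) - \m{g}(\m{x}^*)) + o(\|\m{x}-\m{x}^*\|^2)$, where the $\m{A}\tr\g{\lambda}^*$ term drops out since $\m{u}\tr\m{A}\tr\g{\lambda}^* = 0$ for $\m{u} \in \C{T}$. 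The quantity $\m{g}(\m{x})$ itself can be tied back to $E(\m{x})$ through the variational characterization of the projection $\m{y}(\m{x},1)$: the optimality condition (\ref{first-order}) expresses $\m{g}(\m{x})$ in terms of $\m{d}^1(\m{x}) = \m{y}(\m{x},1) - \m{x}$ and the multiplier $\g{\lambda}$, so $\|\m{g}(\m{x}) + \m{A}\tr\g{\lambda}\| = \|\m{d}^1(\m{x})\| = E(\m{x})$. Combining the tangential estimate $\|\m{u}\| \le c E(\m{x})$ with the normal estimate $\|\m{w}\| \le c E(\m{x})$ and assembling $\|\m{x}-\m{x}^*\|^2 = \|\m{u}\|^2 + \|\m{w}\|^2$ gives the Euclidean-norm constant appearing in (\ref{loc-bound}), which is built from the factors $(1+\kappa)$ and $(3+\kappa)$ that arise from the Lipschitz bound (\ref{lip*}) on $\m{y}(\cdot,1)$ and the related Lipschitz estimate (\ref{lipd}) on $\m{d}^1$.

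The main obstacle I anticipate is the bookkeeping that links $E(\m{x})$ simultaneously to both the active-constraint residual $(\m{b}-\m{Ax})_i$ for $i \in \C{A}_+(\m{x}^*)$ and to the tangential gradient component, while producing precisely the stated constant rather than merely some generic $c$. The residual bound is delicate because a constraint active at $\m{x}^*$ with positive multiplier need not be active at $\m{x}$, yet the projected point $\m{y}(\m{x},1)$ keeps it active (by the argument used in Theorem~\ref{nondegenerate-theorem} and Lemma~\ref{ratio}), so the discrepancy $(\m{b}-\m{Ax})_i$ must be absorbed into $\|\m{d}^1(\m{x})\|$ through the geometry of the projection. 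Tracking how the Lipschitz constants $(1+\kappa)$ from (\ref{lip*}) and $(2+\kappa)$ from (\ref{lipd}) propagate through the tangential and normal estimates — and in particular recognizing the factor $(3+\kappa)$ as the combination $(1+\kappa)+(2+\kappa)$ or a similar sum — is the step where the explicit constant in (\ref{loc-bound}) is earned, and it is where I would spend the most care.
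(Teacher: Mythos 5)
Your overall skeleton --- the orthogonal decomposition of $\m{x}-\m{x}^*$ relative to $\C{N}(\m{A}_{\C{I}})$ with $\C{I}=\C{A}_+(\m{x}^*)$, Pythagoras, second-order coercivity for the tangential part, projection geometry for the normal part --- is exactly the paper's, which realizes the splitting concretely as $\m{x}-\m{x}^* = (\m{x}-\hat{\m{x}}) + (\hat{\m{x}}-\m{x}^*)$ with $\hat{\m{x}} = \C{P}_{\C{S}_+}(\m{x})$ and $\C{S}_+ = \{\m{x} : (\m{Ax}-\m{b})_i = 0 \mbox{ for } i\in\C{A}_+(\m{x}^*)\}$. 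But two steps in your plan do not go through as written. For the normal component, bounding $\|\m{w}\|$ by linear independence of the active gradients times the constraint residuals, and then the residuals by $E(\m{x})$, produces a Hoffman-type constant depending on $\m{A}$ --- not the constant $1$ that the stated bound (\ref{loc-bound}) requires. The paper gets $\|\m{x}-\hat{\m{x}}\| \le E(\m{x})$ exactly: near $\m{x}^*$ the multipliers $\Lambda_i(\m{x},1)$ stay positive for $i\in\C{A}_+(\m{x}^*)$ (Corollary~\ref{SetLip}), so by complementary slackness $\m{y}(\m{x},1)\in\C{S}_+$, and since $\hat{\m{x}}$ is the closest point of $\C{S}_+$ to $\m{x}$, $\|\m{x}-\hat{\m{x}}\| \le \|\m{x}-\m{y}(\m{x},1)\| = E(\m{x})$. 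No linear independence or residual estimate is needed for this half.

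The more serious gap is in the tangential estimate. Writing $\m{g}(\m{x}) = -\m{d}^1(\m{x}) - \m{A}\tr\g{\lambda}(\m{x})$ from (\ref{first-order}) and testing against $\m{u}$, the term $\m{u}\tr\m{A}\tr\g{\lambda}(\m{x})$ does not drop out: $(\m{Au})_i = 0$ only for $i\in\C{A}_+(\m{x}^*)$, while $\g{\lambda}(\m{x})$ can have positive components on the degenerate indices $i\in\C{A}(\m{y}(\m{x},1))\setminus\C{A}_+(\m{x}^*)$. By Corollary~\ref{SetLip} those components are $O(\|\m{x}-\m{x}^*\|)$, so the offending term is $O(\|\m{x}-\m{x}^*\|\,\|\m{u}\|)$ --- the same order as the coercivity term $\sigma\|\m{u}\|^2$ you are relying on, with a constant (depending on $\m{A}$) that you cannot assume is small relative to $\sigma$. (The Hessian cross term $\m{u}\tr\nabla^2 f(\m{x}^*)\m{w}$ is $O(E(\m{x})\|\m{u}\|)$ and harmless; the multiplier term is the real obstruction.) The paper avoids this entirely by never testing at $\m{x}$: it establishes the strong monotonicity bound (\ref{31}) on $\C{B}_\rho(\m{x}^*)\cap\C{S}_+$, applies it at $\hat{\m{x}}\in\C{S}_+$, and invokes property P8 of \cite{hz05a} to convert that monotonicity into $\|\hat{\m{x}}-\m{x}^*\| \le \frac{1+\kappa}{0.5\sigma}\|\m{d}^1(\hat{\m{x}})\|$, then transfers back with (\ref{lipd}) and $\|\hat{\m{x}}-\m{x}\|\le E(\m{x})$ to obtain $\|\m{d}^1(\hat{\m{x}})\| \le (3+\kappa)\|\m{d}^1(\m{x})\|$; incidentally the factor is $1+(2+\kappa)$, not $(1+\kappa)+(2+\kappa)$, and the $(1+\kappa)$ in the final constant comes from P8 rather than from (\ref{lip*}). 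To close your version you would need either this detour through $\hat{\m{x}}$ or a separate argument eliminating the degenerate-multiplier term.
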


\begin{proof}
By the continuity of the second derivative of $f$,
it follows from (\ref{opt2}) that for $\rho> 0$ sufficiently small,
\begin{eqnarray}
\quad \quad \quad (\m{x}-\m{x}^*)\tr (\m{g} (\m{x})-\m{g} (\m{x}^*)) &=&
(\m{x}-\m{x}^*)\tr \int_0^1 \nabla^2 f (\m{x}^* + t (\m{x} - \m{x}^*)) dt \;
(\m{x} - \m{x}^*) \nonumber \\
&\ge& 0.5\sigma \|\m{x}-\m{x}^*\|^2
\label{31}
\end{eqnarray}
for all $\m{x} \in \C{B}_{\rho}(\m{x}^*) \cap \C{S}_+$, where
\[
\C{S}_+ = \{ \m{x} \in \mathbb{R}^n :
(\m{Ax}- \m{b})_i = 0 \mbox{ for all } i \in \C{A}_+(\m{x}^*)\} .
\]
Given $\m{x} \in \C{B}_\rho (\m{x}^*)$,
define $\hat{\m{x}} = \C{P}_{\C{S}_+}(\m{x})$.
Since $\C{P}_{\Omega \cap \C{S}_+}(\m{x} - \m{g}(\m{x})) \in \C{S}_+$,
it follows that
\begin{equation}\label{151}
\|\hat{\m{x}} - \m{x}\| = \|\C{P}_{\C{S}_+}(\m{x}) - \m{x}\| \le
\|\C{P}_{\Omega \cap \C{S}_+}(\m{x}- \m{g}(\m{x})) - \m{x}\| .
\end{equation}
Since $\Lambda_i(\m{x}^*) > 0$ for all $i \in \C{A}_+(\m{x}^*)$,
it follows from Corollary~\ref{SetLip} and complementary slackness
that $\rho$ can be chosen smaller if necessary to ensure that
\begin{equation}\label{*}
(\m{Ay}(\m{x}, 1) - \m{b})_i = 0 \mbox{ for all } i \in \C{A}_+(\m{x}^*) ,
\end{equation}
which implies that $\m{y}(\m{x}, 1) \in \C{S}_+$.
Since $\m{y}(\m{x}, 1) = \C{P}_\Omega (\m{x} - \m{g}(\m{x}))$ and
$\m{y}(\m{x},1) \in \C{S}_+$, we also have
$\C{P}_\Omega (\m{x} - \m{g}(\m{x})) =$
$\C{P}_{\Omega\cap\C{S}_+} (\m{x} - \m{g}(\m{x}))$.
With this substitution in (\ref{151}), we obtain
\begin{equation}\label{h53}
\|\hat{\m{x}} - \m{x}\| \le 
\| \C{P}_{\Omega} (\m{x} - \m{g}(\m{x})) - \m{x}\| =
\| \m{y}(\m{x}, 1) - \m{x}\| = \| \m{d}^1(\m{x}) \| = E(\m{x}).
\end{equation}
By the Lipschitz continuity of $\m{g}$, (\ref{h53}), and (\ref{lipd}),
it follows that
\begin{eqnarray}
\| \m{d}^1(\hat{\m{x}}) \| & \le&
\| \m{d}^1(\m{x}) \| + \|  \m{d}^1(\hat{\m{x}}) - \m{d}^1(\m{x}) \| \nonumber \\
 & \le& \| \m{d}^1(\m{x}) \| +
(2+\kappa) \| \m{x} - \hat{\m{x}} \|  \nonumber \\
&\le&  (3+\kappa)  \| \m{d}^1(\m{x}) \| \label{Z30}
\end{eqnarray}
for all $\m{x} \in \C{B}_{\rho}(\m{x}^*)$.
Since $\hat{\m{x}} = \C{P}_{\C{S}_+}(\m{x})$,
the difference $\hat{\m{x}} - \m{x}$ is orthogonal to
$\C{N}(\m{A}_{\C{I}})$ when $\C{I} = \C{A}_+(\m{x}^*)$.
Since $\hat{\m{x}} - \m{x}^* \in \C{N}(\m{A}_{\C{I}})$, it follows from
Pythagoras that
\begin{equation}\label{xbar23}
\|\m{x} - \hat{\m{x}} \|^2 + \|\hat{\m{x}} - \m{x}^*\|^2 =
\|\m{x} - \m{x}^*\|^2.
\end{equation}
Consequently, $\hat{\m{x}} \in  \C{B}_{\rho}(\m{x}^*)$
for all $\m{x} \in \C{B}_{\rho}(\m{x}^*)$, and
\begin{equation}\label{x-x*}
\|\m{x} - \m{x}^*\| =
\sqrt {\|\m{x} - \hat{\m{x}} \|^2 + \|\hat{\m{x}} - \m{x}^*\|^2} .
\end{equation}
By P8 in \cite{hz05a}, (\ref{31}), and (\ref{Z30}), we have
\begin{equation}\label{98}
\|\hat{\m{x}} - \m{x}^*\| \le
\left( \frac{1+\kappa}{0.5\sigma} \right) \|\m{d}^1 (\hat{\m{x}})\| \le
\left( \frac{(1+\kappa)(3+\kappa)}{0.5\sigma} \right) \|\m{d}^1 (\m{x})\| .
\end{equation}
Insert
(\ref{h53}) and (\ref{98}) in (\ref{x-x*}) to complete the proof.
\end{proof}


We now examine the asymptotic behavior of the
undecided index set $\C{U}$.
\begin{lemma}
\label{undecided-lemma}
If $f$ is twice continuously differentiable in a neighborhood of
a local minimizer $\m{x}^*$ for $(\ref{P})$ where
the active constraint gradients are linearly independent
and the strong second-order sufficient optimality condition holds,
and if PASA generates an infinite sequence of iterates converging to
$\m{x}^*$, then $\C{U}(\m{x}_k)$ is empty for $k$ sufficiently large.
\end{lemma}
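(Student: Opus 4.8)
The plan is to show that, once $k$ is large, no constraint index $i$ can satisfy both inequalities in the definition of $\C{U}(\m{x}_k)$; I will do this by partitioning $\{1,\dots,m\}$ into $\C{F}(\m{x}^*)$, $\C{A}_+(\m{x}^*)$, and $\C{A}(\m{x}^*)\setminus\C{A}_+(\m{x}^*)$ and disposing of each class separately. First observe that $\m{x}_k \to \m{x}^*$, that $E$ is Lipschitz by (\ref{lipd}) with $E(\m{x}^*)=0$, so $E(\m{x}_k)\to 0$, and that $E(\m{x}_k)>0$ for every $k$ (otherwise the while-loops of PASA would stop, contradicting the infiniteness of the sequence). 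For $k$ large enough that $\m{x}_k$ lies in the balls of both Lemma~\ref{stability} and Corollary~\ref{SetLip}, Lemma~\ref{stability} gives $\|\m{x}_k-\m{x}^*\|\le C\,E(\m{x}_k)$ for a fixed $C$, while Corollary~\ref{SetLip} with $\alpha=1$, combined with the scaling identity $\Lambda(\m{x}^*,1)=\g{\lambda}^*$ from (\ref{scaling}), yields $\|\g{\lambda}(\m{x}_k)-\g{\lambda}^*\|\le 2c(1+\kappa)\|\m{x}_k-\m{x}^*\|$, where $\g{\lambda}(\m{x}_k)$ is the unique multiplier used in $\C{U}$.

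For any index with $\lambda_i^*=0$ --- that is, $i\in\C{F}(\m{x}^*)$ (by complementary slackness) or $i\in\C{A}(\m{x}^*)\setminus\C{A}_+(\m{x}^*)$ (by definition of $\C{A}_+$) --- the two bounds combine to give $\lambda_i(\m{x}_k)\le 2c(1+\kappa)C\,E(\m{x}_k)$. Since $\gamma\in(0,1)$, we have $E(\m{x}_k)/E(\m{x}_k)^\gamma=E(\m{x}_k)^{1-\gamma}\to 0$, so for $k$ large $\lambda_i(\m{x}_k)<E(\m{x}_k)^\gamma$ and the multiplier condition in $\C{U}(\m{x}_k)$ fails; hence no such $i$ belongs to $\C{U}(\m{x}_k)$.

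For the strictly active indices $i\in\C{A}_+(\m{x}^*)$ the multiplier condition may hold, so instead I show the inactivity condition fails, invoking Lemma~\ref{ratio}: for $k$ large $\|\m{x}_k-\bar{\m{x}}_k\|\le c\|\m{x}_k-\m{x}^*\|^2$, where $\bar{\m{x}}_k$ from (\ref{fstar}) satisfies $(\m{A}\bar{\m{x}}_k-\m{b})_i=0$ for all $i\in\C{A}_+(\m{x}^*)\cup\C{A}(\m{x}_k)$. For such $i$, feasibility of $\m{x}_k$ and Cauchy--Schwarz on the $i$-th row give
\[
0 \le (\m{b}-\m{Ax}_k)_i = (\m{A}(\bar{\m{x}}_k-\m{x}_k))_i \le \|\m{A}_{i}\|\,c\,\|\m{x}_k-\m{x}^*\|^2 \le c'\,E(\m{x}_k)^2,
\]
using Lemma~\ref{stability} once more. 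Since $\beta\in(1,2)$, $E(\m{x}_k)^2/E(\m{x}_k)^\beta=E(\m{x}_k)^{2-\beta}\to 0$, so for $k$ large $(\m{b}-\m{Ax}_k)_i<E(\m{x}_k)^\beta$ and the inactivity condition fails. Thus no $i\in\C{A}_+(\m{x}^*)$ lies in $\C{U}(\m{x}_k)$ either, and as the three classes exhaust all constraints, $\C{U}(\m{x}_k)=\emptyset$ for $k$ sufficiently large.

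The crux, and the step I expect to require the most care, is the treatment of $\C{A}_+(\m{x}^*)$: one must trade the \emph{quadratic} constraint-violation estimate of Lemma~\ref{ratio} against the threshold $E^\beta$, which succeeds precisely because $\beta$ was chosen below $2$, mirroring the way $\gamma<1$ defeats the linearly vanishing multipliers. The remaining points are routine: confirming that a single radius and a single family of constants serve all three index classes at once, and that $E(\m{x}_k)$ stays strictly positive yet tends to $0$ along the infinite sequence so that the powers $E^\gamma$ and $E^\beta$ behave as claimed.
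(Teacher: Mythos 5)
Your proof is correct and follows essentially the same route as the paper's: the index set is split into $\C{A}_+(\m{x}^*)$ (where the inactivity test fails via Lemma~\ref{ratio}, Lemma~\ref{stability}, and $\beta<2$) and its complement (where the multiplier test fails via Corollary~\ref{SetLip}, Lemma~\ref{stability}, and $\gamma<1$). Your three-way partition collapses to the paper's two-way one since you handle $\C{F}(\m{x}^*)$ and $\C{A}(\m{x}^*)\setminus\C{A}_+(\m{x}^*)$ identically, so there is no substantive difference.
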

\begin{proof}
If $E(\m{x}_k) = 0$ for some $k$, then PASA terminates and the lemma holds
trivially.
Hence, assume that $E(\m{x}_k) \ne 0$ for all $k$.
To show  $\C{U}(\m{x}_k)$ is empty for some $k$, we must show that
either
\[
\mbox{(a) } \g{\lambda}_i(\m{x}_k)/E(\m{x}_k)^{\gamma} < 1 \quad \mbox{or}\quad
\mbox{(b) } (\m{b} - \m{Ax})_i/E(\m{x})^{\beta} < 1
\]
for each $i$.
If $i \in \C{A}_+(\m{x}^*)$, then $[\m{b} - \m{A}\bar{\m{x}}_k]_i = 0$, and
by Lemma~\ref{ratio},
\[
[\m{b} -\m{Ax}_k]_i = [(\m{A}(\bar{\m{x}}_k - \m{x}_k)]_i \le
\|\m{A}\| \|\bar{\m{x}}_k - \m{x}_k\| \le
c\|\m{A}\|\|\m{x}_k - \m{x}^*\|^2.
\]
By Lemma \ref{stability}, there exists a constant $d$ such that
$\|\m{x} - \m{x}^*\| \le d E(\m{x})$ for $\m{x}$ near $\m{x}^*$.
Hence, for all $i \in \C{A}_+(\m{x}^*)$ and $k$ sufficiently large, we have
\[
[\m{b} -\m{Ax}_k]_i \le cd^2 \|\m{A}\| E(\m{x}_k)^2 =
cd^2  \|\m{A}\| E(\m{x}_k)^{2-\beta} E(\m{x}_k)^\beta .
\]
Since $\beta \in (1,2)$, $E(\m{x}_k)^{2-\beta}$ tends to zero as $k$
tends to infinity, and (b) holds when $k$ is large enough that
$cd^2 \|\m{A}\| E(\m{x}_k)^{2-\beta} < 1$.

If $i \in \C{A}_+(\m{x}^*)^c$, then $\lambda_i(\m{x}^*) = 0$.
By Corollary~\ref{SetLip}, there exist $c \in \mathbb{R}$ such that
\[
{\lambda}_i (\m{x}_k) =
{\lambda}_i (\m{x}_k) - {\lambda}_i (\m{x}^*) \le c\|\m{x}_k - \m{x}^*\|
\le cd E (\m{x}_k) = cd E(\m{x}_k)^{1-\gamma} E(\m{x}_k)^\gamma.
\]
Since $\gamma \in (0,1)$, $E(\m{x}_k)^{1-\gamma}$ tends to zero as
$k$ tends to infinity, and (a) holds when $k$ is large enough that
$cd E(\m{x}_k)^{1-\gamma} < 1$.
In summary, for $k$ sufficiently large, (a) holds when
$i \in \C{A}_+(\m{x}^*)^c$ and (b) holds when $i \in \C{A}_+(\m{x}^*)$.
This implies that $\C{U}(\m{x}_k)$ is empty for $k$ sufficiently large.
\end{proof}

As shown in the proof of Lemma~\ref{undecided-lemma},
(b) holds for $i \in \C{A}_+(\m{x}^*)$ and $k$ sufficiently large.
This implies that the constraint violation
$(\m{b} - \m{Ax})_i$ tends to zero faster than the error $E(\m{x}_k)$.
The following result, along with  Lemma~\ref{undecided-lemma},
essentially implies that PASA eventually performs only phase two.
\smallskip

\begin{lemma}
\label{mu-bound}
If $f$ is twice continuously differentiable in a neighborhood of
a local minimizer $\m{x}^*$ for $(\ref{P})$ where
the active constraint gradients are linearly independent
and the strong second-order sufficient optimality condition holds,
and if PASA generates an infinite sequence of iterates converging to
$\m{x}^*$, then there exists $\theta^* > 0$ such that
\begin{equation}\label{facecond}
e(\m{x}_k) \ge \theta^* E(\m{x}_k)
\end{equation}
for $k$ sufficiently large.
\end{lemma}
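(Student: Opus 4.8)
The plan is to prove the stronger pointwise estimate $e(\m{x}_k) \ge c' \|\m{x}_k - \m{x}^*\|$ for some fixed $c' > 0$ and all large $k$, and then read off (\ref{facecond}). The conversion is cheap: since $\m{d}^1(\m{x}^*) = \m{0}$, the Lipschitz bound (\ref{lipd}) with $\alpha = 1$ gives $E(\m{x}_k) = \|\m{d}^1(\m{x}_k)\| \le (2+\kappa)\|\m{x}_k - \m{x}^*\|$, hence $\|\m{x}_k - \m{x}^*\| \ge E(\m{x}_k)/(2+\kappa)$ and so $e(\m{x}_k) \ge (c'/(2+\kappa))\,E(\m{x}_k)$. (Only this easy upper bound on $E$ is needed, not Lemma~\ref{stability}.)

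First I would record the structural facts for $k$ large. As in (\ref{601}), once $\m{x}_k \in \C{B}_r(\m{x}^*)$ we have $\C{A}(\m{x}_k) \subset \C{A}(\m{x}^*)$, so both $\m{x}_k$ and $\m{x}^*$ satisfy the constraints indexed by $\C{A}(\m{x}_k)$ with equality, giving $\m{x}_k - \m{x}^* \in \C{N}(\m{A}_{\C{A}(\m{x}_k)})$. Write $V_k = \C{N}(\m{A}_{\C{A}(\m{x}_k)})$; by (\ref{dL}), $\m{g}^\C{A}(\m{x}_k) = \C{P}_{V_k}(\m{g}(\m{x}_k))$, so $e(\m{x}_k) = \|\C{P}_{V_k}(\m{g}(\m{x}_k))\| = \min\{\|\m{g}(\m{x}_k) + \m{A}^\tr \g{\nu}\| : \mbox{supp}(\g{\nu}) \subset \C{A}(\m{x}_k)\}$. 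Since $\m{x}^*$ is stationary with unique multiplier $\g{\lambda}^* = \Lambda(\m{x}^*)$ supported on $\C{A}_+(\m{x}^*)$, we have $\m{g}(\m{x}^*) = -\m{A}^\tr \g{\lambda}^* \in \mbox{range}(\m{A}_{\C{A}_+(\m{x}^*)}^\tr)$. I would then split on whether the strongly active set is captured by $\C{A}(\m{x}_k)$.

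\textbf{Case A} ($\C{A}_+(\m{x}^*) \not\subset \C{A}(\m{x}_k)$): pick $i_0 \in \C{A}_+(\m{x}^*) \setminus \C{A}(\m{x}_k)$. For every competitor $\g{\nu}$ supported in $\C{A}(\m{x}_k)$, the vector $\g{\nu} - \g{\lambda}^*$ is supported in $\C{A}(\m{x}^*)$ and has $i_0$-component $-\lambda_{i_0}^* \le -\min_{i \in \C{A}_+(\m{x}^*)} \lambda_i^* < 0$. The linear independence of the rows of $\m{A}_{\C{A}(\m{x}^*)}$ then forces $\|\m{A}^\tr(\g{\nu} - \g{\lambda}^*)\|$ to exceed a fixed positive constant, so $\|\C{P}_{V_k}(\m{g}(\m{x}^*))\| = \min_{\g{\nu}} \|\m{A}^\tr(\g{\nu} - \g{\lambda}^*)\| \ge 2\epsilon_0 > 0$ uniformly in $k$. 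Subtracting the Lipschitz error $\|\C{P}_{V_k}(\m{g}(\m{x}_k) - \m{g}(\m{x}^*))\| \le \kappa\|\m{x}_k - \m{x}^*\|$ gives $e(\m{x}_k) \ge \epsilon_0$ for $k$ large, and since $E(\m{x}_k) \to 0$ this yields $e(\m{x}_k) \ge E(\m{x}_k)$ eventually.

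\textbf{Case B} ($\C{A}_+(\m{x}^*) \subset \C{A}(\m{x}_k)$): now $\m{g}(\m{x}^*) \in \mbox{range}(\m{A}_{\C{A}(\m{x}_k)}^\tr) = V_k^\perp$, so $\C{P}_{V_k}(\m{g}(\m{x}^*)) = \m{0}$ and $e(\m{x}_k) = \|\C{P}_{V_k}(\m{H}_k \m{v}_k)\|$, where $\m{v}_k = \m{x}_k - \m{x}^*$ and $\m{H}_k = \int_0^1 \nabla^2 f(\m{x}^* + t\m{v}_k)\,dt \to \nabla^2 f(\m{x}^*)$. Since $\m{v}_k \in V_k \subset \C{N}(\m{A}_{\C{A}_+(\m{x}^*)})$, the strong second-order condition (\ref{opt2}) applies to $\m{v}_k$; using $\m{v}_k \in V_k$ and the self-adjointness of $\C{P}_{V_k}$, I get $\m{v}_k^\tr \C{P}_{V_k}(\m{H}_k \m{v}_k) = \m{v}_k^\tr \m{H}_k \m{v}_k \ge (\sigma/2)\|\m{v}_k\|^2$ for $k$ large (by $\m{H}_k \to \nabla^2 f(\m{x}^*)$). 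Cauchy--Schwarz then gives $e(\m{x}_k) \ge (\sigma/2)\|\m{x}_k - \m{x}^*\|$, i.e. $c' = \sigma/2$. Combining the two cases, (\ref{facecond}) holds for $k$ large with $\theta^* = \min\{1,\ \sigma/(2(2+\kappa))\}$. The main obstacle is Case~B: the crux is the observation that $\m{x}_k - \m{x}^*$ lies in the very null space $V_k$ on which $e$ is measured, which is what lets the strong second-order condition turn the projected gradient residual into a multiple of $\|\m{x}_k - \m{x}^*\|$; the rest is bookkeeping with the Lipschitz and linear-independence constants.
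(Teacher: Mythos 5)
Your proof is correct, and it takes a genuinely different route from the paper's. The paper establishes (\ref{facecond}) by introducing the auxiliary point $\bar{\m{x}}_k$, the projection of $\m{x}_k$ onto the affine set where the constraints indexed by $\C{A}_+(\m{x}^*)\cup\C{A}(\m{x}_k)$ hold with equality, and leaning on Lemma~\ref{ratio} (the bound $\|\m{x}_k-\bar{\m{x}}_k\|\le c\|\m{x}_k-\m{x}^*\|^2$), whose proof is heavily algorithmic: it uses F2, the startup rule F4, and a three-case analysis of GPA versus LCO iterations. The chain is then $E(\m{x}_k)\le c\|\bar{\m{x}}_k-\m{x}^*\|$, followed by $0.5\sigma\|\bar{\m{x}}_k-\m{x}^*\|\le\|\m{g}^{\C{I}}(\bar{\m{x}}_k)\|$ from (\ref{opt2}) and the multiplier identities, and finally $\|\m{g}^{\C{I}}(\bar{\m{x}}_k)\|\le e(\m{x}_k)+\epsilon\kappa\|\bar{\m{x}}_k-\m{x}^*\|$. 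You dispense with $\bar{\m{x}}_k$ and Lemma~\ref{ratio} entirely by splitting on whether $\C{A}_+(\m{x}^*)\subset\C{A}(\m{x}_k)$. When the inclusion fails, the gap $\lambda_{i_0}^*>0$ together with the linear independence of the rows of $\m{A}_{\C{A}(\m{x}^*)}$ pins the distance from $\m{g}(\m{x}^*)$ to $\mbox{range}(\m{A}_{\C{A}(\m{x}_k)}\tr)$ away from zero uniformly in $k$, so $e(\m{x}_k)$ stays bounded below while $E(\m{x}_k)\to 0$. When it holds, $\m{x}_k-\m{x}^*$ lies both in the null space $V_k$ on which $e$ is measured and in $\C{N}(\m{A}_{\C{A}_+(\m{x}^*)})$, so (\ref{opt2}) applies directly and yields $e(\m{x}_k)\ge(\sigma/2)\|\m{x}_k-\m{x}^*\|\ge \bigl(\sigma/(2(2+\kappa))\bigr)E(\m{x}_k)$. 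I checked the individual steps --- the identity $e(\m{x}_k)=\min\{\|\m{g}(\m{x}_k)+\m{A}\tr\g{\nu}\|:\mbox{supp}(\g{\nu})\subset\C{A}(\m{x}_k)\}$, the self-adjointness argument giving $\m{v}_k\tr\C{P}_{V_k}(\m{H}_k\m{v}_k)=\m{v}_k\tr\m{H}_k\m{v}_k$, and the uniformity of the constants over the two (possibly alternating) cases --- and they all hold. What your argument buys is economy and generality: it is a purely local error-bound statement, valid for any feasible sequence converging to $\m{x}^*$ irrespective of how PASA generated it, and it produces the explicit constant $\theta^*=\min\{1,\sigma/(2(2+\kappa))\}$. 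What it does not supersede is Lemma~\ref{ratio} itself, which the paper still needs in Lemma~\ref{undecided-lemma} (the quadratic decay of $(\m{b}-\m{Ax}_k)_i$ for $i\in\C{A}_+(\m{x}^*)$ is what empties the undecided set), so the surrounding machinery cannot be dropped from the paper --- but for this particular lemma your argument is simpler and independent of the algorithm.
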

\smallskip

\begin{proof}
Let $\C{I} := \C{A}_+(\m{x}^*) \cup \C{A}(\m{x}_k)$.
The projection $\bar{\m{x}}_k$ has the property that the difference
$\m{x}_k - \bar{\m{x}}_k$ is orthogonal to $\C{N}(\m{A}_\C{I})$.
Choose $k$ large enough that
$\C{A}(\m{x}_k) \subset \C{A}(\m{x}^*)$.
It follows that $\bar{\m{x}}_k - \m{x}^* \in \C{N}(\m{A}_\C{I})$.
Hence, by Pythagoras, we have
\[
\| \m{x}_k -\bar{\m{x}}_k\|^2 +  \| \bar{\m{x}}_k - \m{x}^* \|^2
= \| \m{x}_k -\m{x}^*\|^2.
\]
Consequently,
\begin{equation}\label{201}
\|\bar{\m{x}}_k -\m{x}^*\| \le \| \m{x}_k -\m{x}^*\|,
\end{equation}
and $\bar{\m{x}}_k$ approaches $\m{x}^*$ as $k$ tends to infinity.
Choose $\rho > 0$ small enough that $f$ is twice
continuously differentiable in $\C{B}_\rho(\m{x}^*)$, and let
$\kappa$ be the Lipschitz constant for $\nabla f$ in $\C{B}_\rho(\m{x}^*)$.
Choose $k$ large enough that $\m{x}_k \in \C{B}_\rho(\m{x}^*)$.
By (\ref{201}) $\bar{\m{x}}_k \in \C{B}_\rho(\m{x}^*)$.
Since $\m{d}^1(\m{x}^*)=\m{0}$, it follows from (\ref{lipd}) that
\begin{eqnarray}
\|\m{d}^1(\m{x}_k)\| &\le& \|\m{d}^1(\m{x}_k) - \m{d}^1(\bar{\m{x}}_k)\| +
\|\m{d}^1(\bar{\m{x}}_k) - \m{d}^1(\m{x}^*)\|
\nonumber \\
&\le& (2+\kappa)(\|\m{x}_k - \bar{\m{x}}_k \| + \|\bar{\m{x}}_k - \m{x}^* \|)
\label{Z39}
\end{eqnarray}
Lemma~\ref{ratio} gives
\begin{equation}\label{999}
\|\bar{\m{x}}_k - \m{x}_k\| \le
c \|\m{x}_k - \m{x}^*\|^2
\le c \|\m{x}_k - \m{x}^*\|(\|{\m{x}}_k - \bar{\m{x}}_k\| +
\|\bar{\m{x}}_k - \m{x}^*\|) .
\end{equation}
Since $\m{x}_k$ converges to $\m{x}^*$, it follows from (\ref{999})
that for any $\epsilon > 0$,
\begin{equation}\label{xxzz}
\|\bar{\m{x}}_k - \m{x}_k\| \le \epsilon \|\bar{\m{x}}_k - \m{x}^*\|
\end{equation}
when $k$ is sufficiently large.
Combine (\ref{Z39}) and (\ref{xxzz}) to obtain
\begin{equation}\label{first}
\|\m{d}^1 (\m{x}_k) \| \le c \|\bar{\m{x}}_k - \m{x}^*\|
\end{equation}
for some constant $c$ and any $k$ sufficiently large.

Choose $\rho > 0$ small enough that (\ref{31}) holds for
all $\m{x} \in \C{B}_\rho (\m{x}^*)$, and
choose $k$ large enough that $\bar{\m{x}}_k \in \C{B}_\rho (\m{x}^*)$.
The bound (\ref{31}) yields
\begin{equation} \label{upper}
0.5\sigma\|\bar{\m{x}}_k-\m{x}^*\|^2 \le
(\bar{\m{x}}_k - \m{x}^*)\tr (\m{g}(\bar{\m{x}}_k) - \m{g}(\m{x}^*)) .
\end{equation}
By the first-order optimality conditions for a local minimizer $\m{x}^*$ of
(\ref{P}), there exists a multiplier $\g{\lambda}^* \in \mathbb{R}^m$ such that
\begin{equation}\label{1st}
\m{g}(\m{x}^*) + \m{A}\tr \g{\lambda}^* = 0 \quad \mbox{where} \quad
(\m{b} - \m{Ax}^*)\tr \g{\lambda}^* = 0 \; \mbox{ and } \;
\g{\lambda}^* \ge \m{0}.
\end{equation}
Observe that $\lambda_i^* [\m{A}(\bar{\m{x}}_k - \m{x}^*)]_i = 0$
for each $i$ since
$[\m{A}(\bar{\m{x}}_k - \m{x}^*)]_i = 0$
when $i \in \C{A}_+(\m{x}^*)$, while
$\lambda_i^* = 0$ when $i \in \C{A}_+(\m{x}^*)^c$.
Hence, we have
\[
[\m{A}(\bar{\m{x}}_k - \m{x}^*)]\tr \g{\lambda}^* = 0.
\]
We utilize this identity to obtain
\begin{equation}\label{x1}
(\bar{\m{x}}_k - \m{x}^*)\tr \m{g}(\m{x}^*) =
(\bar{\m{x}}_k - \m{x}^*)\tr (\m{g}(\m{x}^*) + \m{A}\tr \g{\lambda}^*) = \m{0}
\end{equation}
by the first equality in (\ref{1st}).

The first-order optimality conditions for the minimizer
$\m{g}^\C{I}(\bar{\m{x}}_k)$ in (\ref{dL})
imply the existence of $\g{\lambda}_\C{I}$ such that
\begin{equation}\label{2nd}
\m{g}^\C{I}(\bar{\m{x}}_k) -
\m{g}(\bar{\m{x}}_k) + \m{A}_\C{I}\tr \g{\lambda}_\C{I} = \m{0}
\quad \mbox{where} \quad \m{A}_\C{I} \bar{\m{x}}_k = \m{b}_\C{I} .
\end{equation}
Since $\C{A}(\m{x}_k) \subset \C{A}(\m{x}^*)$, we have
$\m{A}_\C{I} (\bar{\m{x}}_k - \m{x}^*) = \m{0}$,
$[\m{A}_\C{I} (\bar{\m{x}}_k - \m{x}^*)]\tr\g{\lambda}_\C{I} = \m{0}$, and

\begin{equation}\label{x2}
(\bar{\m{x}}_k - \m{x}^*)\tr \m{g}(\bar{\m{x}}_k) =
(\bar{\m{x}}_k - \m{x}^*)\tr (\m{g}(\bar{\m{x}}_k) -
\m{A}_\C{I}\tr \g{\lambda}_\C{I}) = 
(\bar{\m{x}}_k - \m{x}^*)\tr
\m{g}^\C{I}(\bar{\m{x}}_k)
\end{equation}
by (\ref{2nd}).
Combine (\ref{upper}), (\ref{x1}), and (\ref{x2}) to obtain
\begin{equation}\label{dbar}
0.5\sigma\|\bar{\m{x}}_k-\m{x}^*\| \le \|\m{g}^\C{I}(\bar{\m{x}}_k)\| .
\end{equation}

If $\C{J}$ denotes $\C{A}(\m{x}_k)$, then
$\C{J} \subset \C{I} = \C{A}(\m{x}_k) \cup \C{A}_+(\m{x}_k)$.
Hence, $\C{N}(\m{A}_\C{I}) \subset \C{N}(\m{A}_\C{J})$.
It follows that
\begin{equation}\label{p1}
\|\m{g}^\C{I} (\m{x}_k)\| \le
\|\m{g}^\C{J} (\m{x}_k)\| = e(\m{x}_k) .
\end{equation}
Since the projection on a convex set is nonexpansive,
\begin{equation}\label{p2}
\|\m{g}^\C{I}(\bar{\m{x}}_k)) -
\m{g}^\C{I}({\m{x}}_k) \| \le
\|\m{g}(\bar{\m{x}}_k) - \m{g}({\m{x}}_k) \| \le
\kappa \|\bar{\m{x}}_k - {\m{x}}_k\| .
\end{equation}
Combine (\ref{xxzz}), (\ref{p1}), and (\ref{p2}) to get
\begin{eqnarray*}
\|\m{g}^\C{I}(\bar{\m{x}}_k)\|
&\le&
\|\m{g}^\C{I}(\bar{\m{x}}_k) - \m{g}^\C{I}({\m{x}}_k)\| +
\|\m{g}^\C{I}({\m{x}}_k)\| \\
&\le& e(\m{x}_k) + \kappa \|\bar{\m{x}}_k - {\m{x}}_k\| \le
e(\m{x}_k) + \epsilon \kappa \|\bar{\m{x}}_k - {\m{x}}^*\|.
\end{eqnarray*}
Consequently, by (\ref{dbar}) we have
$0.4\sigma \|\bar{\m{x}}_k - \m{x}^*\| \le e(\m{x}_k)$ for
$\epsilon$ sufficiently small and $k$ sufficiently large.
Finally, (\ref{first}) completes the proof.
\end{proof}
\smallskip

By the analysis of Section~\ref{nondegenerate},
(\ref{facecond}) holds with $\theta^* = 1$ for a nondegenerate problem;
neither the strong second-order sufficient optimality condition nor
independence of the active constraint gradients are needed in this case.

We now show that within a finite number of iterations,
PASA will perform only LCO.
\begin{theorem}
\label{UA-rules}
If PASA with $\epsilon = 0$ generates an infinite sequence of iterates
converging to a local minimizer $\m{x}^*$ of $(\ref{P})$
where the active constraint gradients are linearly independent and the
strong second-order sufficient optimality condition holds,
and if $f$ is twice continuously differentiable near $\m{x}^*$,
then within a finite number of iterations, only phase two is executed.
\end{theorem}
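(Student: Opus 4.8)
The plan is to combine Lemmas~\ref{undecided-lemma} and~\ref{mu-bound} with the switching rule, reducing the whole question to the behaviour of the threshold $\theta$. By Lemma~\ref{undecided-lemma} there is an index beyond which $\C{U}(\m{x}_k) = \emptyset$, and by Lemma~\ref{mu-bound} there are $\theta^* > 0$ and an index beyond which $e(\m{x}_k) \ge \theta^* E(\m{x}_k)$; since $E(\m{x}_k) > \epsilon = 0$ along the infinite sequence, these hold nonvacuously for all large $k$. The key reduction is this: it suffices to show that $\theta \le \theta^*$ for $k$ sufficiently large. Indeed, once $\theta \le \theta^*$ we have $e(\m{x}_k) \ge \theta^* E(\m{x}_k) \ge \theta E(\m{x}_k)$, so in phase two the test $e(\m{x}_k) < \theta E(\m{x}_k)$ that triggers a return to phase one never fires, while in phase one the test $e(\m{x}_k) \ge \theta E(\m{x}_k)$ forces an immediate branch to phase two; hence only phase two is executed thereafter.

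First I would show that $\theta$ is decreased only finitely often, so it stabilizes at some $\theta_\infty > 0$. The value of $\theta$ is multiplied by $\mu \in (0,1)$ only in phase one, and only when $\C{U}(\m{x}_k) = \emptyset$ and $e(\m{x}_k) < \theta E(\m{x}_k)$. For large $k$ the first condition holds automatically, while the second together with $e(\m{x}_k) \ge \theta^* E(\m{x}_k)$ forces $\theta > \theta^*$ at the instant of the decrease. Since each decrease shrinks $\theta$ by the fixed factor $\mu$, only finitely many decreases can occur while $\theta$ stays above $\theta^*$; together with the finitely many decreases possible before the lemmas take effect, this proves $\theta$ is eventually constant.

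It remains to rule out the case $\theta_\infty > \theta^*$. If $\theta_\infty > \theta^*$, then, since no further decrease occurs, every phase-one iterate $\m{x}_k$ with large $k$ satisfies $e(\m{x}_k) \ge \theta_\infty E(\m{x}_k)$ (otherwise a decrease would fire); thus each return to phase one is a single gradient-projection step that branches straight back to phase two, and if phase one is not abandoned for good these bounces occur infinitely often. I would attack this configuration with the following observation. At any iterate $\m{x}_k$ that triggers a return to phase one we have $e(\m{x}_k) < \theta_\infty E(\m{x}_k)$, and since $E(\m{x}_k) \to 0$ this forces $e(\m{x}_k) = \|\m{g}^{\C{A}}(\m{x}_k)\| \to 0$. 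Because $\m{g}^\C{J}(\m{x}^*)$, the projection of $\m{g}(\m{x}^*)$ onto $\C{N}(\m{A}_\C{J})$, vanishes exactly when $\C{A}_+(\m{x}^*) \subseteq \C{J}$ (the optimal multiplier is supported on $\C{A}_+(\m{x}^*)$), and since there are only finitely many possible active sets and $\m{x}_k \to \m{x}^*$, every such return must occur from an iterate with $\C{A}_+(\m{x}^*) \subseteq \C{A}(\m{x}_k)$. By Corollary~\ref{SetLip} and complementary slackness, as in the proof of Theorem~\ref{nondegenerate-theorem}, the projected point $\m{y}(\m{x}_k,\alpha)$ that begins the phase-one step again retains all of $\C{A}_+(\m{x}^*)$ in its active set.

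The main obstacle is to turn this into a contradiction. What I would prove is that the phase-one check following such a bounce again sees $e < \theta_\infty E$, so that the decrease rule fires and drives $\theta$ below $\theta^*$, contradicting the stabilization of $\theta$. This amounts to controlling how the stationarity measures $e$ and $E$ change across one gradient-projection step: both the returning iterate and the projected iterate lie on faces whose active set contains $\C{A}_+(\m{x}^*)$, and on every such face the strong second-order condition (through Lemma~\ref{stability}) makes $e$ and $E$ each comparable to the distance to $\m{x}^*$. The delicate point, which I expect to require the most care, is to make this comparison quantitative and uniform over the finitely many relevant faces, so that the ratio $e/E$ cannot jump above the fixed level $\theta_\infty$ over a single projection; together with the reduction above this yields $\theta_\infty \le \theta^*$ and hence the theorem.
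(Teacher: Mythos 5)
Your first two paragraphs track the paper's proof: invoke Lemma~\ref{undecided-lemma} to get $\C{U}(\m{x}_k)=\emptyset$ for large $k$, invoke Lemma~\ref{mu-bound} to get $e(\m{x}_k)\ge\theta^* E(\m{x}_k)$, observe that a late reduction of $\theta$ can only occur when $\theta>\theta^*$ (since it requires $\theta E(\m{x}_k)>e(\m{x}_k)\ge\theta^*E(\m{x}_k)$ with $E(\m{x}_k)>0$), and conclude that $\theta$ is reduced only finitely often and stabilizes. Up to that point the argument is correct and is essentially the paper's.

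The problem is your last paragraph, and it is a genuine gap. To exclude $\theta_\infty>\theta^*$ you must show that, on each of the hypothesized infinitely many single-step returns to phase one, the iterate produced by the GPA step again satisfies $e<\theta_\infty E$, so that the reduction rule fires. You explicitly leave this unproven, and the route you sketch --- making $e$ and $E$ each comparable to $\|\m{x}-\m{x}^*\|$ uniformly over the faces containing $\C{A}_+(\m{x}^*)$ so that the ratio $e/E$ cannot jump above $\theta_\infty$ across one projection --- is supported by nothing in the paper and is far from obvious: $e(\cdot)$ depends on $\C{A}(\m{x})$ and is discontinuous, a backtracking GPA step can both add and drop active constraints, and Lemma~\ref{mu-bound} bounds $e/E$ from below by $\theta^*$ but never from above by $\theta_\infty$. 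The paper needs no such estimate. Its accounting is direct: the test that triggers the reduction of $\theta$ (namely $\C{U}$ empty and $e<\theta E$) is the very condition under which the algorithm is sent to, or kept in, phase one rather than phase two. Hence once $\C{U}(\m{x}_k)=\emptyset$, every phase-one episode in which the algorithm does not at once branch to phase two cuts $\theta$ by the factor $\mu$; each cut requires $\theta>\theta^*$, so after finitely many phase-one iterations $\theta\le\theta^*$, and from then on $e(\m{x}_k)\ge\theta^*E(\m{x}_k)\ge\theta E(\m{x}_k)$ forces phase one to branch immediately to phase two and prevents phase two from ever branching back. In short, the case $\theta_\infty>\theta^*$ that you attempt to exclude analytically is excluded in the paper by the bookkeeping of the $\theta$-update itself; as written, your proof does not close that case.
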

\begin{proof}
By Lemma~\ref{undecided-lemma}, the undecided index set $\C{U}(\m{x}_k)$
is empty for $k$ sufficiently large, and by Lemma~\ref{mu-bound},
there exists $\theta^* > 0$ such that $e(\m{x}_k) \ge \theta^*E(\m{x}_k)$.
If $k$ is large enough that $\C{U}(\m{x}_k)$ is empty, then in phase one,
$\theta$ will be reduced until $\theta \le \theta^*$.
Once this holds, phase one branches to phase two and phase two cannot
branch to phase one.
\end{proof}

Similar to Theorem~4.2 of \cite{hz05a},
when $f$ is a strongly convex quadratic
and LCO is based on a projected conjugate gradient method,
Theorem~\ref{UA-rules} implies that when the active constraint gradients
are linearly independent, PASA converges to the optimal
solution in a finite number of iterations.
\section{Conclusions}
\label{conclusions}
A new active set algorithm PASA was developed for solving polyhedral
constrained nonlinear optimization problems.
Phase one of the algorithm is the gradient projection
algorithm, while phase two is any algorithm for linearly
constrained optimization (LCO)
which monotonically improves the value of the objective function,
which never frees an active constraint, and which has the property
that the projected gradients tend to zero,
at least along a subsequence of the iterates.
Simple rules were given in Algorithm~\ref{PASA} for branching between
the two phases.
Global convergence to a stationary point was established, while asymptotically,
within a finite number of iterations, only phase two is performed.
For nondegenerate problems, this result follows almost immediately, while
for degenerate problems, the analysis required linear independence of
the active constraint gradients, the strong second-order sufficient
optimality conditions, and a special startup procedure for LCO.
The numerical implementation and performance of PASA for general
polyhedral constrained problems will be studied in a separate paper.
Numerical performance for bound constrained optimization
problems is studied in \cite{hz05a}.
\bibliographystyle{siam}

\end{document}